\newtheorem{theorem}{Theorem}[section]
\newtheorem{assumption}[theorem]{Assumption}
\newtheorem{proposition}[theorem]{Proposition}
\newtheorem{corollary}[theorem]{Corollary}
\newtheorem{definition}[theorem]{Definition}
\newtheorem{lemma}[theorem]{Lemma}
\newtheorem{remark}[theorem]{Remark}
\definecolor{mySkyBlue}{rgb}{0.53, 0.81, 0.92}
\definecolor{myBlue}{rgb}{0.2, 0.2, 0.6}
\definecolor{myTurquoise}{rgb}{0.19, 0.84, 0.78}
\definecolor{myOlive}{rgb}{0.42, 0.56, 0.14}
\definecolor{myForestGreen}{rgb}{0.13, 0.55, 0.13}
\definecolor{myBurntOrange}{rgb}{0.8, 0.33, 0.0}
\definecolor{myBrickRed}{rgb}{0.8, 0.25, 0.33}
\definecolor{myCerulean}{rgb}{0.11, 0.67, 0.84}
\definecolor{myMidnightBlue}{rgb}{0.1, 0.1, 0.44}
\newcommand{\Vb}{{\mathbf{b}}}
\newcommand{\Vv}{{\mathbf{v}}}
\newcommand{\Vz}{{\mathbf{z}}}
\providecommand{\Bd}{{\boldsymbol{d}}}
\providecommand{\Bn}{{\boldsymbol{n}}}
\providecommand{\Bx}{{\boldsymbol{x}}}
\providecommand{\By}{{\boldsymbol{y}}}
\newcommand{\VA}{{\mathbf{A}}}
\providecommand{\bbN}{\mathbb{N}}
\providecommand{\bbR}{\mathbb{R}}
\newcommand{\dd}{\,\mathrm{d}}
\newcommand{\nn}{\mathcal{N}\mathcal{N}}
\title{Deep neural network surrogates for non-smooth quantities of interest in shape uncertainty quantification}
\author{Laura Scarabosio\footnote{Radboud University, Institute for Mathematics, Astrophysics and Particle Physics. E-mail: scarabosio@science.ru.nl.}}
\begin{document}
\maketitle

\begin{abstract}
We consider the point evaluation of the solution to interface problems with geometric uncertainties, where the uncertainty in the obstacle is described by a high-dimensional parameter $\By\in[-1,1]^d$, $d\in\mathbb{N}$. We focus in particular on an elliptic interface problem and a Helmholtz transmission problem. Point values of the solution in the physical domain depend in general non-smoothly on the high-dimensional parameter, posing a challenge when one is interested in building surrogates. Indeed, high-order methods show poor convergence rates, while methods which are able to track discontinuities usually suffer from the so-called curse of dimensionality. For this reason, in this work we propose to build surrogates for point evaluation using deep neural networks. We provide a theoretical justification for why we expect neural networks to provide good surrogates. Furthermore, we present extensive numerical experiments showing their good performance in practice. We observe in particular that neural networks do not suffer from the curse of dimensionality, and we study the dependence of the error on the number of point evaluations (that is, the number of discontinuities in the parameter space), as well as on several modeling parameters, such as the contrast between the two materials and, for the Helmholtz transmission problem, the wavenumber.
\end{abstract}

\section{Introduction}

In many applications in engineering, the behavior of a physical system is described by a partial differential equation (PDE) involving parameters which are not known or which are subject to random variations. These uncertain quantities can be collected in a parameter $\By$ living in a parameter space $\mathcal{P}_d$ whose dimension $d$ is usually high. Since quantities of interest (QoI), such as the solution to the PDE and output functionals, depend on this parameter, it is often of practical interest to compute a \textsl{surrogate} for the parameter-to-QoI map, that is a surrogate for the map $\By\mapsto q(\By)$, $\By\in\mathcal{P}_d$, where $q$ is a quantity of interest taking values in a separable Banach space and depending on the solution of the PDE. Challenges for the computation of such surrogate are the high dimensionality of the parameter space and possible non-smooth dependence of the QoI on the parameter. In this work, we tackle a case where both these challenges occur, that is the computation of a surrogate for the parameter-to-QoI map when the parameter $\By\in\mathcal{P}_d$ describes shape variations of a random interface, the underlying PDE is either an elliptic interface problem or a Helmholtz transmission problem, and the QoI are point values of the solution at locations that can be crossed by the interface for some parameter realizations.

Uncertainties in the shape of the domain (or, as in our case, of an inclusion) can be treated with various approaches. If the uncertain variations from a deterministic shape are small compared to the size of the domain, then it is possible to use a perturbation approach based on shape calculus \cite{CherS,HaSS,HL} to construct a Taylor expansion for the parameter-to-QoI map. In this work, we consider large shape variations and therefore the perturbation approach is not applicable to our case. Other possibilities which can be applied also for large shape variations are level set methods \cite{NSM,NCSM} and the fictitious domain approach \cite{CK}. Both approaches embed the stochastic domain in a larger domain, converting a problem on a stochastic domain to an interface problem, and therefore introducing non-smooth dependence of values of the solution close to the fictitious interface with respect to the parameter representing the uncertainty. 

In this work we use the mapping technique \cite{XT,TX}: we map the configuration with stochastic interface to a configuration with deterministic interface using a diffeomorphism depending on $\By\in\mathcal{P}_d$. Such approach allows to avoid remeshing the domain for every parameter configuration and it is also convenient for the theoretical analysis. Smoothness of the parameter-to-solution map using the mapping approach has been analysed in \cite{CNT,HPS} for elliptic problems, in \cite{HSSS} for Helmholtz transmission problems, in \cite{CSZ} for stationary Navier-Stokes equations, in \cite{JSSZ} for Maxwell equations in frequency domain and in \cite{CC} for linear parabolic problems. It is known that, for interface problems with uncertain interface, point values of the solution on the physical domain, for points which are crossed by the interface for some parameter realizations, do not depend smoothly on the parameter $\By\in\mathcal{P}_d$ modeling the shape uncertainties. The reason is the non-smoothness of the solution with respect to the spatial coordinate across the interface, which interferes with the (smooth) parameter dependence of the solution on the reference configuration and causes the non-smooth dependence of the solution on the physical domain with respect to the parameter. It has been shown in \cite{LSMLMC} that, if one is not interested in a surrogate but only on statistics of point values on the physical domain, then multilevel Monte Carlo provides a robust and viable way for computing these statistics. In this work, we state instead that deep neural networks provide a robust and viable method for the construction of a surrogate of the parameter-to-QoI map when the QoI consists of point values of the solution in the physical domain.

In the literature there are methods able to deal with discontinuities in the parameter space. Among these are adaptive sparse grids \cite{MZ} and Voronoi tessellations \cite{Rushdi}. These methods though become unfeasible for high dimensions of the parameter space (say greater than 10), especially when the surface of discontinuity is not aligned with the coordinate axes in the parameter space. Another possibility would be to track the surface of discontinuity as described in \cite{ZhangGunz}. This method, though, poses restrictions on the geometry of the surface of discontinuity and seems to be feasible only for a limited number of discontinuities. 

Our numerical experiments show that deep neural networks are able to cope with high-dimensional parameter spaces and that, although their performance depends on the number of surfaces of discontinuity, they provide surrogates that have good approximation properties also when the number of discontinuities is large.

Deep neural networks have been used in the solution of PDEs for learning the solution to a single equation, for learning the physical model or, in parametric PDEs, for learning the parameter-to-QoI map. When learning the solution to a single PDE, most of the works use the equation to define the loss functional, either using the equivalence between solution of PDEs and energy minimization, as in \cite{Weinan18,Samaniego20}, of by minimizing the residual (the difference between the left and right hand side of the PDE), as in \cite{Raissi17,Lu19,Mishra20radiative}. We also mention the use of deep neural networks in fluid dynamics to detect regions in the domain where the solution to the PDE has poor regularity \cite{Ray18}. Examples of works which use deep neural networks to learn the underlying PDE to a physical problem are \cite{Berg19,Raissi18}. These works exploit the tunability of parameters of neural networks to use the latter as descriptors of the underlying physical dynamics. The work presented in this paper falls in the third category, the one of parametric PDEs, where neural networks are used to learn the mapping which associates to the value of a parameter the solution to a PDE or a QoI computed from it. Previous works in this category are \cite{KPSb,LMR,TB}. In \cite{TB}, the authors use an encoder and a one layer neural network to approximate the map from the parameter to the solution to an elliptic PDE, and they interpret the neural network construction as the recovery of a nonlinear active subspace. The work \cite{LMR} addresses applications in fluid dynamics, where a neural network is trained to approximate the map from the parameter to observables like the lift and drag coefficients. In the same paper, the authors show also that, when the dimension of the parameter space is not too high, using quasi-Monte Carlo points instead of random points for training and a smooth activation function can lead to a smaller generalization gap. The work \cite{KPSb} studies the performance of deep neural networks in approximating the solution to an elliptic PDE with affinely and non-affinely parametrized diffusion coefficients. The authors conclude that the approximation properties for deep neural networks are quite sensitive to the test case considered. The dependence of the approximation error on the dimension of the parameter space is also observed to be case-dependent, but in none of their test cases the curse of dimensionality (i.e. exponential increase of the error with the dimension) is noticed. In the framework of stochastic partial differential equations (SPDEs), we mention the works \cite{Han17,Weinan17}, where the authors use the Feynman-Kac formula to build efficient algorithms for the solution of parabolic SPDEs. 

On the theoretical side, most of the works on neural networks for parametric PDEs focus on the expessivity of networks. The work \cite{SZ} establishes dimension-independent convergence rates for the neural network approximation under the assumption that the solution to the PDE depends holomorphically on the parameter. For Bayesian inverse problems, \cite{HSZ} proves exponential expression rates for the data-to-QoI map. 
In \cite{KPSa}, using concepts from reduced basis approximation, the authors exploit the low-dimensionality of the solution manifold to obtain expression rates rates which are superior than those provided by classical neural network approximation theory. Finally, it has been shown, in \cite{Grohs18,Hutz20,Jentzen18} for the approximation error and in \cite{Berner20} for the generalization error, that neural networks are able to overcome the curse of dimensionality for some specific SPDEs. Rather than relying on these theoretical results on the approximation to the solution to a parametric PDE, in this paper we rely more on theoretical results on the approximation of piecewise smooth functions with neural networks, because we will see that our QoI is piecewise smooth with respect to the parameter representing the uncertainty. A review on theoretical approximation rates for non-smooth and piecewise smooth functions can be found in Subsection \ref{ssec:NNtheory} of this paper.

The main contributions of this work are: (1) we provide a theoretical explanation for why we expect (deep) neural networks to provide accurate surrogates for the parameter-to-QoI map, where in our case the parameter represents shape variations of an inclusion and the QoI are point values of the solution to a stochastic interface problem; (2) we give experimental evidence of the ability of neural network to perform such task; (3) we study experimentally the approximation properties in dependence of different modeling parameters, such as the dimension of the parameter space, the contrast between the two materials, the number of point evaluations and, for the Helmholtz transmission problem, the frequency.

The paper is organized as follows. In Section \ref{sec:stochinterface} we introduce the elliptic interface problem and the Helmholtz transmission problem with stochastic interface, which are our objects of study. There, we also provide a concrete parametrization of the stochastic interface and we use the mapping approach to formulate the model problems as variational problems on a configuration with deterministic interface and stochastic coefficients. Finally, we recall some facts on parameter dependence of point values of the solution on the physical configurations, which will be used in Section \ref{sec:NN}. In the latter, we first recall some basic definitions on neural networks. We proceed by reviewing the existent theoretical results on approximation of non-smooth and piecewise smooth functions, which are then applied to our case to state the approximation properties of our parameter-to-QoI map. The experimental results are presented in Section \ref{sec:numexp}, followed by some conclusions.

\section{Stochastic interface problems}\label{sec:stochinterface}

We first provide, in Subsection \ref{ssec:interface}, a parametrization of the stochastic interface. The model problems considered in this work are described in Subsection \ref{ssec:modelpb}. For both of them, we present, in Subsection \ref{ssec:mapping}, the formulation on a reference configuration, which is used in the numerical expreriments, and we recall, in Subsection \ref{ssec:paramdep}, the regularity of the mapping from the high-dimensional parameter describing the stochastic interface variations and point values of the solution.  

\subsection{Parametrization of the stochastic interface}\label{ssec:interface}

For ease of presentation, we consider a concrete parametrization of the stochastic interface, although the material of this section holds also for other interface representations. We assume that the domain inside the stochastic interface is star-shaped with respect to the origin, and we represent the interface by a parameter- and angle-dependent radius:
\begin{equation}\label{eq:radiusy}
r(\By;\varphi) = r_0(\varphi) + \sum_{j=1}^{d} b_j y_j\psi_j(\varphi),\quad \varphi\in[0,2\pi),
\end{equation}
where $r_0\in C^1_{per}([0,2\pi))$ is a deterministic interface which is perturbed by stochastic fluctuations. Namely, $(\psi_j)_{j\geq 1}$ are linearly independent functions in $C^1_{per}([0,2\pi))$ and $\By:=(y_j)_{j=1}^d\in [-1,1]^d$ is the image of $d$ independent, identically distributed (i.i.d.) random variables $\sim\mathcal{U}([-1,1])$. The latter are scaled by deterministic coefficients $(b_j)_{j\geq 1}$. As basis functions we take
\begin{equation*}
\psi_j(\varphi)=\begin{cases}
                   \sin(\tfrac{j+1}{2}\varphi)& \text{for }j \text{ odd},\\
                   \cos(\tfrac{j}{2}\varphi)& \text{for }j\text{ even},
                  \end{cases}
\end{equation*}
corresponding to the eigenfunctions of a rotation-invariant covariance kernel when $r_0$ is a circle. Equation \eqref{eq:radiusy} can be thought of as the truncation to the $d$-th term of a Karhunen-Lo\`eve-like expansion. We denote by $\mathcal{P}_d:=[-1,1]^d$ the high-dimensional parameter space. We assume the following on the expansion in \eqref{eq:radiusy}.
\begin{assumption}\label{ass:radius}
There exists a $d$-independent constant $C>0$ such that, for every $j\in\mathbb{N}$, $b_{2j},b_{2j-1}\leq Cj^{-p}$, with $p>1$. Moreover, $\sum_{j\geq 1}|b_j|\leq \frac{r_0^{-}}{2}$, with $r_0^{-}=\inf_{\varphi\in[0,2\pi)}r_0(\varphi)$.
\end{assumption}
The requirement $p>1$ ensures that the radius is continuous with $C^0$-norm bounded independently of $d$ \cite{HSSS}. In the numerical experiments, we also consider $p=1$, as limit case. The second part of the assumption ensures instead a uniform bound on the amplitude of the variations. 

\subsection{Model problems}\label{ssec:modelpb}

We denote by $\Gamma(\By):=\left\{r(\By;\varphi), \varphi\in [0,2\pi)\right\}$ the parameter-dependent interface, by $D_i=D_i(\By)$ the part of the domain which is enclosed in $\Gamma(\By)$ and by $D_o=D_o(\By)$ the part of the domain outside $\Gamma(\By)$, see Figure \ref{fig:domain}. 
We consider the two following model problems.

\begin{figure}[t]
\begin{center}
\fbox{\begin{tikzpicture}[scale=0.8]
\footnotesize
   \useasboundingbox (-3.2,-3) rectangle (3.2,3);
   \draw[dotted,thick] (0,0) circle[radius=1.6];
 \filldraw[fill=mySkyBlue!70!white,draw=mySkyBlue!40!myBlue,domain=0:6.28,samples=100]  plot ({1.6*cos(\x r)*(1+0.05*cos(3*\x r)+0.04*cos(8*\x r)+0.015*cos(11*\x r)}, {1.6*sin(\x r)*(1+0.05*cos(3*\x r)+0.04*cos(8*\x r)+0.015*cos(11*\x r)});
 \node at (0,-0.25) {$0$};
 \draw[very thin] (0,0)--(1.3,1);
 \node at (0.5,0.5) {$r(\By;\varphi)$};
  \node at (-0.4,-0.9) {$D_i(\By)$};
   \node at (0,2.2) {$D_o(\By)$};
\node at (-0.3,-1.8) {$\Gamma(\By)$};
 \end{tikzpicture}}
 \caption{Geometry of the model problems.}\label{fig:domain}
 \end{center}
  \end{figure}
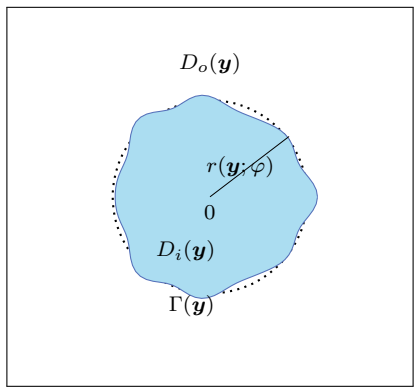
  
 \medskip 
\textbf{Elliptic interface problem}: we take $D=(-1,1)^2$, $D=D_i(\By)\cup \Gamma(\By)\cup D_o(\By)$, and we take
\begin{subequations}
    \begin{align}[left=\empheqlbrace]
   &-\nabla \cdot (\alpha(\Gamma(\By);\Bx)\nabla u_{\text{ell}}) = f,\quad\text{in } D_i(\By)\cup D_o(\By),\\
   &\llbracket u_{\text{ell}} \rrbracket_{\Gamma(\By)}=0,\quad\llbracket \alpha(\Gamma(\By);\Bx)\nabla u_{\text{ell}} \cdot \Bn \rrbracket_{\Gamma(\By)}=0,\\
   &\left.u_{\text{ell}}\right|_{\partial D}=0,\\
   &\text{for every }\By\in\mathcal{P}_d,\nonumber
    \end{align}\label{eq:ep}
    \end{subequations}
where $\partial D$ coincides with the outer boundary of $D_o$, $\Bn=\Bn(\By)$ is the normal to $\Gamma(\By)$ pointing towards $D_o$ and $\llbracket \cdot \rrbracket_{\Gamma(\By)}$ denotes the jump across $\Gamma(\By)$. We assume $f\in L^2(D)$, while $\alpha$ is a piecewise-constant coefficient which has a jump at the parameter-dependent interface:
     \begin{equation}\label{eq:alphaell}
   \alpha(\Gamma(\By);\Bx)=\begin{cases}
   1 &\text{if } \Bx\in D_o(\By),\\
   \alpha_i &\text{if } \Bx\in D_i(\By),
   \end{cases}
    \end{equation}
for some $\alpha_i>0$.

 \medskip 
\textbf{Helmholtz interface problem}: in this case the problem is posed on the whole $\mathbb{R}^2$ and $D_o(\By)=\mathbb{R}^2\setminus (\Gamma(\By)\cup D_i(\By))$ is unbounded. We study
\begin{subequations}
    \begin{align}[left=\empheqlbrace]
 &-\nabla\cdot\left(\alpha(\Gamma(\By);\Bx)\nabla u_{\text{Helm}}\right) - \kappa^2(\Gamma(\By);\Bx)u_{\text{Helm}} = 0 \quad \text{in }\mathbb{R}^2,\\
 & {\llbracket u_{\text{Helm}} \rrbracket_{\Gamma(\By)}=0,\quad\llbracket \alpha(\Gamma(\By);\Bx)\nabla u_{\text{Helm}} \cdot \Bn \rrbracket_{\Gamma(\By)}=0},\label{eq:jumps}\\
 &  \lim_{\lVert\Bx\rVert\rightarrow\infty}\left(\dfrac{\partial}{\partial \lVert\Bx\rVert}-\rm{i}\kappa\right)(u_{\text{Helm}}-u_{\text{inc}})=0,\label{eq:somm}\\
 & \text{for every }\By\in\mathcal{P}_d,\nonumber
    \end{align}\label{eq:hp}
    \end{subequations}
with piecewise-constant coefficients
     \begin{equation}\label{eq:alphahelm}
   \alpha(\Gamma(\By);\Bx)=\begin{cases}
   1 &\text{if } \Bx\in D_o(\By),\\
   \alpha_i &\text{if } \Bx\in D_i(\By),
   \end{cases}\qquad    \kappa^2(\Gamma(\By);\Bx)=\begin{cases}
   \kappa^2_o &\text{if } \Bx\in D_o(\By),\\
   \kappa^2_i &\text{if } \Bx\in D_i(\By),
   \end{cases}
    \end{equation}
with $\alpha_i, \kappa^2_i, \kappa^2_o>0$. The solution $u_{\text{Helm}}$ to \eqref{eq:hp} is the total field, while $u_{\text{inc}}=e^{\rm{i}\kappa_o \Bd\cdot\Bx}$ is an incident plane wave in the direction $\Bd$, $\lVert\Bd\rVert=1$. In \eqref{eq:jumps}, $\Bn$ denotes again the normal to $\Gamma(\By)$ pointing towards $D_o$. We note that, differently from the elliptic model problem, the solution to the Helmholtz transmission problem is complex-valued. Equation \eqref{eq:somm} is the Sommerfeld radiation condition for the scattered wave. Since for computational reasons we can only consider formulations on bounded domains, we consider a circle of fixed radius $R$ containing all realizations of the scatterer in its interior and, denoting by $D$ the domain inside this circle, we replace \eqref{eq:somm} with a condition involving the Dirichlet-to-Neumann map ($\operatorname{DtN}$) on $\partial D$ (see \cite[Sect. 6.2.3]{Ned} for its definition):
\begin{equation}
\dfrac{\partial}{\partial \Bn_R}(u_{\text{Helm}}-u_{\text{inc}}) = \operatorname{DtN}(u_{\text{Helm}}) - \operatorname{DtN}(u_{\text{inc}})\quad\text{on }\partial D,\tag{\ref{eq:hp}d}
\end{equation}
$\Bn_R$ being the outer normal to $\partial D$.

For the theoretical statements in the next section, we need a parameter-independent bound on the $H^1$-norm of the solution, that is we need to exclude the occurrence of resonances. We note that such a condition is also of practical relevance in the training of neural networks, because, when training a network with a finite number of samples, we cannot expect it to generalize well to resonant cases.

\begin{assumption}\label{ass:nontrapping}
For every $\By\in\mathcal{P}_d$ and every $d\in\mathbb{N}$, the Helmholtz transmission problem \eqref{eq:hp} is nontrapping, namely, $\lVert u(\By)\rVert_{H_{\kappa}^1(D)}^2:=\left|\nabla u\right|^2_{H^1(D)} + \left\|\kappa u\right\|^2_{L^2(D)}\leq C$ for all $\By\in\mathcal{P}_d$, with $C$ independent of $\By\in\mathcal{P}_d$.
\end{assumption}
It has been proved in \cite{MS} that the assumption above is fulfilled if the scatterer is star-shaped with respect to the origin and
\begin{equation}\label{eq:nontrapping}
\frac{\kappa_i^2}{\kappa_o^2}\leq \alpha_i,
\end{equation}
or if we assume the wavelenght to be large compared to the size of the scatterer \cite{HSSS}. In this work we assume that \eqref{eq:nontrapping} holds and we design the numerical experiments so that \eqref{eq:nontrapping} is fulfilled.

For both model problems, we consider as quantity of interest $q(\By)$ the value of the solution at some points close to the stochastic interface. Namely, for the elliptic interface problem we consider $q(\By)=\left\{u_{\text{ell}}(\By;\Bx_i)\right\}_{i=1}^{N_p}$ and for the Helmholtz transmission problem we focus on the field amplitude at some points, $q(\By)=\left\{\left|u_{\text{Helm}}(\By;\Bx_i)\right|\right\}_{i=1}^{N_p}$, $N_p\in\mathbb{N}$ denoting the number of evaluation points.

\subsection{Formulation on a deterministic geometry configuration}\label{ssec:mapping}

In order to avoid remeshing of the geometry when solving \eqref{eq:ep} and \eqref{eq:hp} numerically, for each model problem we consider a parameter-dependent diffeomorphism to a reference, deterministic geometry configuration. Mapping to a nominal geometry is also convenient to analyse the smoothness of the parameter-to-QoI map in Section \ref{ssec:paramdep}.

More precisely, we define a \textsl{nominal} interface $\hat{\Gamma}:=\left\{r_0(\varphi), \phi\in[0,2\pi)\right\}$, which is independent of $\By\in\mathcal{P}_d$, and we call \textsl{nominal configuration} the geometry configuration when then interface is $\hat{\Gamma}$. We denote by $\hat{D}_i$ and $\hat{D}_o$ the inner and outer domains in the nominal configuration, respectively (for the Helmholtz transmission problem, we use $\hat{D}_o$ to indicate the outer domain after truncation to the bounded domain $D$). Using the mapping approach introduced in \cite{TX,XT}, we take a parameter-dependent mapping $\Phi(\By): D\rightarrow D$, $\By\in\mathcal{P}_d$, from the nominal configuration to the configuration with interface $\Gamma(\By)$. We denote by $\hat{\Bx}\in D$ the coordinates in the nominal configuration and by $\Bx\in D$ the coordinates in the configuration with interface $\Gamma(\By)$. Concretely, as mapping here we consider
\begin{equation}\label{eq:phicircle}
 \Bx(\By)=\Phi(\By;\hat{\Bx})=\hat{\Bx}+\chi\left(\hat{\Bx}\right)\left(r(\By;\hat{\varphi}_{\hat{\Bx}})-r_0(\hat{\varphi}_{\hat{\Bx}})\right)\frac{\hat{\Bx}}{\lVert \hat{\Bx}\rVert},
\end{equation}
where $\hat{\varphi}_{\hat{\Bx}}:=\arg(\hat{\Bx})$ and $\chi:D\rightarrow [0,1]$ is a mollifier acting on the radial component of $\hat{\Bx}$. We assume that $\chi$ is supported on the annulus $2\pi\times\left[\frac{r_0^{-}}{4},\tilde{R}\right]$ (with $r_0^{-}:=\min_{\varphi\in[0,2\pi)}r_0(\varphi)$ and $\tilde{R}\geq \frac{3r_0^{-}}{2}$ such that the ball with radius $\tilde{R}$ is contained inside $D$ or coincides with it), is strictly increasing in $2\pi\times \left[\frac{r_0^{-}}{4}, r_0\right]$ and strictly decreasing in $2\pi\times\left[r_0,\tilde{R}\right]$. We also assume that $\chi$ has at least the same smoothness in $\overline{\hat{D}_i}$ and $\overline{\hat{D}_o}$ as the nominal radius $r_0$, and $\max\left\{\lVert\chi\rVert_{C^1(\overline{\hat{D}_i})},\lVert\chi\rVert_{C^1(\overline{\hat{D}_o})}\right\}\leq \frac{\sqrt{2}}{r_0^{-}}$. These assumptions guarantee that the mapping $\Phi$ in \eqref{eq:phicircle} is an orientation preserving diffeomorphism with the same spatial smoothness in $\overline{\hat{D}_i}$ and $\overline{\hat{D}_o}$ as the radius $r$ in \eqref{eq:radiusy}, and that the singular values of its Jacobian matrix have $\hat{\Bx}$-independent lower and upper bounds $\sigma_{\min},\sigma_{\max}$. Moreover, if $\lVert r(\By;\cdot)\rVert_{C^1_{per}([0,2\pi))}$ is bounded uniformly with respect to $\By$ and $d$, then the $C^1$-norm of $\Phi$ and the singular values of its Jacobian matrix are also bounded uniformly with respect to $\By$ and $d$. We refer to \cite[Sect. 3]{HSSS} for details.

Using the domain mapping $\Phi$, we can write the variational formulation of the model problems on the nominal configuration. We denote by $\hat{u}_{\text{ell}}:=\Phi^{\ast}u_{\text{ell}}$ and $\hat{u}_{\text{Helm}}:=\Phi^{\ast}u_{\text{Helm}}$ the solutions on the nominal configuration, $\Phi^{\ast}$ being the pullback. For the elliptic interface problem we have:
\begin{equation}
\begin{split}
 \text{Find }\hat{u}_{\text{ell}}(\By)\in H_0^1(D):&\int_{D}\hat{\alpha}(\By;\hat{\Bx})\hat{\nabla} \hat{u}_{\text{ell}}(\By)\cdot\hat{\nabla}\hat{v}\dd \hat{\Bx}= \int_{D}\hat{f}(\By;\hat{\Bx})\hat{v} \dd \hat{\Bx}, \\
 &\text{for every }\hat{v}\in H_0^1(D)\text{ and } \By\in\mathcal{P}_d,
 \end{split}\label{eq:varformep}
\end{equation}
where $\hat{\nabla}$ is the gradient with respect to $\hat{\Bx}$ and
\begin{subequations}
\begin{align}
 \hat{\alpha}(\By;\hat{\Bx}) &= D\Phi(\By)^{-1}D\Phi(\By)^{-\top}\det D\Phi(\By) \alpha(\By;\Phi^{-1}(\By;\Bx)),\label{eq:alphahat}\\
  \hat{f}(\By;\hat{\Bx}) &= f(\Phi(\By;\hat{\Bx}))\det D\Phi(\By) ,
\end{align}\label{eq:coeffshat}
\end{subequations}
$D\Phi$ being the Jacobian matrix of $\Phi$. Similarly, for the Helmholtz transmission problem we write:
\begin{equation}
\begin{split}
 \text{Find }\hat{u}_{\text{Helm}}(\By)\in H^1(D): &\int_{D}\hat{\alpha}(\By;\hat{\Bx})\hat{\nabla} \hat{u}(\By)\cdot\hat{\nabla}\hat{v}- \hat{\kappa}^2(\By;\hat{\Bx})\hat{u}_{\text{Helm}}(\By) \hat{v}\dd\hat{\Bx}-\int_{\partial D}\operatorname{DtN}(\hat{u}_{\text{Helm}}(\By))\hat{v} \dd S \hspace{-0.1cm}\\
 &= \int_{\partial D}\left(\dfrac{\partial u_i}{\partial \Bn_R}-\operatorname{DtN}(u_i)\right)\hat{v} \dd S,\\
 &\text{for every }\hat{v}\in H^1(D)\text{ and } \By\in\mathcal{P}_d,
\end{split}
 \label{eq:varformhp}
\end{equation}
with 
\begin{equation}\tag{\ref{eq:coeffshat}c}
 \hat{\kappa}^2 (\By;\hat{\Bx})= \det D\Phi(\By)\kappa^2(\By;\Phi^{-1}(\By;\Bx))
\end{equation}
and $\hat{\alpha}$ as in \eqref{eq:alphahat}.

We have thus obtained formulations where the coefficients $\hat{\alpha}$ and $\hat{\kappa}$ have a jump at the interface $\hat{\Gamma}$, for all $\By\in\mathcal{P}_d$, and we can discretize \eqref{eq:varformep} or \eqref{eq:varformhp} using finite elements on the reference configuration, without need for remeshing. 

\subsection{Parameter dependence of point evaluation}\label{ssec:paramdep}

We recall here some facts about the smoothness of the map $\By\mapsto q(\By)$, $\By\in\mathcal{P}_d$, when $q(\By)=\left\{u_{\text{ell}}(\By;\Bx_i)\right\}_{i=1}^{N_p}$ or $q(\By)=\left\{\left|u_{\text{Helm}}(\By;\Bx_i)\right|\right\}_{i=1}^{N_p}$. It can be proved that the solutions $\hat{u}_{\text{ell}}=\hat{u}_{\text{ell}}(\By)$ and $\hat{u}_{\text{Helm}}=\hat{u}_{\text{Helm}}(\By)$ on the nominal configuration depend analytically on the parameter $\By\in\mathcal{P}_d$, see \cite{CNT,HPS} for the elliptic problem and \cite{HSSS} for the Helmholtz transmission problem. However, when considering the solutions $u_{\text{ell}}=u_{\text{ell}}(\By)$ and $u_{\text{Helm}}=u_{\text{Helm}}(\By)$ on the physical domain, their regularity with respect to the parameter depends on the regularity of the map $\By\mapsto \hat{u}_{\ast}(\By)$ \textsl{and} on the regularity of $\hat{u}_{\ast}$ with respect to the spatial coordinate $\hat{\Bx}$, for $\ast=$ {\scriptsize{ell, Helm}}, respectively. Since the solutions to \eqref{eq:varformep} and \eqref{eq:varformhp} are non-smooth in space across the interface $\Gamma(\By)$, point values of $u_{\text{ell}}$ or $u_{\text{Helm}}$ have a non-smooth dependence with respect to $\By\in\mathcal{P}_d$, as we now explain.

For each point $\Bx_0$ in which we want to evaluate the solution, we denote by 
\begin{equation}\label{eq:hyperplane}
 \mathcal{P}_d^{\Gamma,\Bx_0}:=\left\{\By\in\mathcal{P}_d: \Bx_0\in \Gamma(\By)\right\}=\left\{\By\in\mathcal{P}_d: \Phi^{-1}(\By;\Bx_0)\in \hat{\Gamma}(\By)\right\}
\end{equation}
the set of realizations for which the point $\Bx_0$ lies on $\Gamma$. Crossing $\mathcal{P}_d^{\Gamma,\Bx_0}$ in $\mathcal{P}_d$ will make $\Gamma(\By)$ sweep $\Bx_0$. The set $  \mathcal{P}_d^{\Gamma,\Bx_0}$ is empty if the point is far enough from the interface so that it is always outside or always inside of it.
Because of the affine parametrization of the radius, we have the following.
\begin{lemma}\label{lem:hyperplane}
For each evaluation point $\Bx_0\in D$, $  \mathcal{P}_d^{\Gamma,\Bx_0}$ is a $(d-1)$-dimensional  hyperplane in $\mathcal{P}_d$.
\end{lemma}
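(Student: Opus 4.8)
The plan is to reduce the geometric membership condition $\Bx_0\in\Gamma(\By)$ to a single scalar equation in $\By$ and then to read off directly that this equation is affine in $\By$. First I would write $\Bx_0$ in polar coordinates, $\Bx_0=\rho_0(\cos\varphi_0,\sin\varphi_0)$ with $\rho_0=\lVert\Bx_0\rVert$ and $\varphi_0=\arg(\Bx_0)$; the origin may be excluded, since it lies inside $\Gamma(\By)$ for every $\By$ by star-shapedness, so $\mathcal{P}_d^{\Gamma,\Bx_0}=\emptyset$ there. Because the region enclosed by $\Gamma(\By)$ is star-shaped with respect to the origin and $\Gamma(\By)$ is described in polar form by the radius $r(\By;\cdot)$, there is exactly one interface point at each angle. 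Hence $\Bx_0\in\Gamma(\By)$ holds if and only if the interface radius at the angle $\varphi_0$ equals $\rho_0$, i.e.
\begin{equation*}
 r(\By;\varphi_0)=\rho_0 .
\end{equation*}

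Next I would substitute the affine parametrization \eqref{eq:radiusy}. Writing $r(\By;\varphi_0)=r_0(\varphi_0)+\sum_{j=1}^d b_j y_j\psi_j(\varphi_0)$ and rearranging, the membership condition becomes the single linear equation
\begin{equation*}
 \sum_{j=1}^d b_j\psi_j(\varphi_0)\,y_j=\rho_0-r_0(\varphi_0),
\end{equation*}
that is $\Ba\cdot\By=c$ with coefficient vector $\Ba=(b_j\psi_j(\varphi_0))_{j=1}^d$ and right-hand side $c=\rho_0-r_0(\varphi_0)$. Thus $\mathcal{P}_d^{\Gamma,\Bx_0}$ is exactly the intersection with the cube $\mathcal{P}_d=[-1,1]^d$ of the affine set $\{\By\in\mathbb{R}^d:\Ba\cdot\By=c\}$, which is a hyperplane of dimension $d-1$ as soon as $\Ba\neq 0$.

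The only point that requires care — and the one genuine obstacle — is checking that $\Ba$ does not vanish, since otherwise the equation would degenerate (to $\emptyset$ or to all of $\mathcal{P}_d$) and the dimension count would fail. Here I would exploit the explicit form of the basis: among the $\psi_j$ one finds both $\psi_1(\varphi)=\sin\varphi$ and $\psi_2(\varphi)=\cos\varphi$, and these cannot vanish simultaneously at any $\varphi_0$ because $\sin^2\varphi_0+\cos^2\varphi_0=1$. Consequently, for $d\geq 2$ and coefficients $b_j\neq 0$, at least one component of $\Ba$ is nonzero, so $\Ba\neq 0$ and $\mathcal{P}_d^{\Gamma,\Bx_0}$ is genuinely $(d-1)$-dimensional whenever it is nonempty. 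The statement is understood for those evaluation points with $\mathcal{P}_d^{\Gamma,\Bx_0}\neq\emptyset$; for points lying always inside or always outside the interface the set is empty, as already observed after \eqref{eq:hyperplane}.
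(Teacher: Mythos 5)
Your proposal is correct and follows exactly the route the paper has in mind: the paper offers no written proof at all, merely prefacing the lemma with ``Because of the affine parametrization of the radius,'' and your argument---reducing $\Bx_0\in\Gamma(\By)$ via polar coordinates and star-shapedness to the single affine equation $\sum_{j=1}^d b_j\psi_j(\varphi_0)\,y_j=\rho_0-r_0(\varphi_0)$---is precisely that remark made explicit. Your additional care about nondegeneracy (checking $\Ba\neq 0$ via $\psi_1=\sin$, $\psi_2=\cos$, under the implicit assumption $b_1,b_2\neq 0$) and about the empty case is in fact more rigorous than the paper's statement, which silently ignores both degeneracies.
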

The smoothness of the solutions to the elliptic and the Helmholtz interface problems with respect to the high-dimensional parameter is stated in the following two propositions, which are a refinement to Proposition 1 in \cite{LSMLMC}. There, we denote by $\mathcal{P}_{d}^{+,\Bx_0}$ and $\mathcal{P}_{d}^{-,\Bx_0}$ the two parts in which the parameter space is divided by the hyperplane $  \mathcal{P}_d^{\Gamma,\Bx_0}$.

\begin{proposition}\label{prop:contyell}
Consider $\Bx_0\in D$ such that $  \mathcal{P}_d^{\Gamma,\Bx_0}$ is not empty and let Assumption \ref{ass:radius} be fulfilled. For every $d\in\bbN$, the map $\By\mapsto u_{\text{\emph{ell}}}(\By;\Bx_0)$ from $\mathcal{P}_d$ to $\bbR$ has the following properties:
\begin{enumerate}[(i)]
\item it is H\"older continuous in $\overline{\mathcal{P}_{d}^{+,\Bx_0}}$ and $\overline{\mathcal{P}_{d}^{-,\Bx_0}}$, that is, it belongs to $C^{k,\gamma}(\overline{\mathcal{P}_{d}^{+,\Bx_0}})\cup C^{k,\gamma}(\overline{\mathcal{P}_{d}^{-,\Bx_0}})$, for every $k\geq 1$ and every $\gamma\in(0,1)$ such that $r_0\in C^{k,\gamma}_{per}([0,2\pi))$, $f\in C^{0,\gamma}(\overline{D})$ for $k=1$ and $f\in C^{k-2,\gamma}(\overline{D})$ for $k\geq 2$;
\item it is globally Lipschitz continuous, that is, it belongs to $C^{0,1}(\overline{\mathcal{P}_d})$, provided $f\in C^{0,\gamma}(\overline{D})$ for  some $\gamma\in(0,1)$.
\end{enumerate}
 If $p>2$ in Assumption \ref{ass:radius}, then $\lVert u_{\text{ell}}(\cdot;\Bx_0)\rVert_{C^{k,\gamma}(\overline{\mathcal{P}_{d}^{+,\Bx_0}})\cup C^{k,\gamma}(\overline{\mathcal{P}_{d}^{-,\Bx_0}})}$ has a $d$-independent upper bound for 
 \begin{equation}\label{eq:k}
\begin{cases}
 k=\lfloor p-1\rfloor,\text{ and } \gamma<p-1-k & \text{if } p-1 \text{ is not integer},\\
 k=p-2,\text{ and any } \gamma\in(0,1) &\text{otherwise},
\end{cases}
\end{equation}
and $ \lVert u_{\text{\emph{ell}}}(\cdot;\Bx_0)\rVert_{C^{0,1}(\overline{\mathcal{P}_{d}})}$ has a $d$-independent upper bound.
\end{proposition}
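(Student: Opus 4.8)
The plan is to exploit the factorization of the physical point value through the nominal configuration. Writing $\hat{\Bx}_0(\By):=\Phi^{-1}(\By;\Bx_0)$ for the pullback of the fixed evaluation point, one has $u_{\text{ell}}(\By;\Bx_0)=\hat{u}_{\text{ell}}(\By;\hat{\Bx}_0(\By))$, so the $\By$-regularity is that of a composition of the parametric solution map with a parameter-dependent spatial evaluation. I would isolate two ingredients. The \emph{parametric} one: by \cite{CNT,HPS} the map $\By\mapsto\hat{u}_{\text{ell}}(\By)$ is analytic, and because $\Phi$ in \eqref{eq:phicircle} is \emph{affine} in $\By$ (the increment $r(\By;\hat\varphi)-r_0(\hat\varphi)=\sum_j b_jy_j\psi_j(\hat\varphi)$ is linear in $\By$), the transported coefficient $\hat{\alpha}$ in \eqref{eq:alphahat} is rational in $\By$ and analytic wherever $\det D\Phi\neq0$; the same affine structure makes $\By\mapsto\hat{\Bx}_0(\By)$ smooth by the implicit function theorem. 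The \emph{spatial} one: on each nominal subdomain the solution solves an elliptic equation with such coefficients, so interface elliptic (Schauder-type) regularity gives $\hat{u}_{\text{ell}}(\By;\cdot)\in C^{k,\gamma}$ up to $\hat\Gamma$ on each side, with $(k,\gamma)$ limited precisely by $r_0\in C^{k,\gamma}_{per}([0,2\pi))$ and the stated regularity of $f$.

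For part (i), I would note that for $\By\in\overline{\mathcal{P}_d^{+,\Bx_0}}$ (respectively $\overline{\mathcal{P}_d^{-,\Bx_0}}$) the pullback $\hat{\Bx}_0(\By)$ remains in a single closed subdomain, on which $\hat{u}_{\text{ell}}$ is spatially $C^{k,\gamma}$. Viewing $\hat{u}_{\text{ell}}$ as an analytic map into $C^{k,\gamma}$ of that subdomain and composing with the smooth point map $\By\mapsto\hat{\Bx}_0(\By)$ through the bounded evaluation functional, the chain rule and the stability of H\"older classes under composition give $u_{\text{ell}}(\cdot;\Bx_0)\in C^{k,\gamma}$ on each closed half, for the same $(k,\gamma)$; Lemma \ref{lem:hyperplane} ensures the two halves cover $\mathcal{P}_d$ and meet exactly on $\mathcal{P}_d^{\Gamma,\Bx_0}$. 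For part (ii), the global Lipschitz bound is a consequence of the transmission condition $\llbracket u_{\text{ell}}\rrbracket_{\Gamma(\By)}=0$: continuity of $\hat{u}_{\text{ell}}$ across $\hat\Gamma$ together with bounded one-sided gradients — the minimal case $k=1$, which needs only $f\in C^{0,\gamma}$ — renders $\hat{u}_{\text{ell}}(\By;\cdot)$ globally Lipschitz in space, and composing with the Lipschitz-in-$\By$ maps $\hat{\Bx}_0$ and $\hat{u}_{\text{ell}}$ yields global Lipschitz continuity on $\overline{\mathcal{P}_d}$.

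The $d$-independent bounds under $p>2$ are the delicate point and the main obstacle. One must differentiate the composition $\hat{u}_{\text{ell}}(\By;\hat{\Bx}_0(\By))$ up to order $k$ in $\By$ and bound the result uniformly in $d$. Each $\By$-derivative acts either on the explicit parametric dependence — carried by the transported coefficient $\hat{\alpha}$ in \eqref{eq:alphahat}, which through $D\Phi$ contains the angular derivatives of the $\psi_j$ — or on the moving point $\hat{\Bx}_0(\By)$; in both channels a $\By$-derivative brings out a spatial derivative of some $\psi_j$, and $\lVert\psi_j^{(m)}\rVert_\infty\sim j^{m}$. Grouping the terms via Fa\`a di Bruno, the $k$-th $\By$-derivative is controlled by $d$-independent spatial $C^{k,\gamma}$-norms of $\hat{u}_{\text{ell}}$ — which are uniform in $d$ because Assumption \ref{ass:radius} bounds $\lVert r(\By;\cdot)\rVert_{C^1_{per}([0,2\pi))}$ uniformly in $\By$ and $d$ — times frequency-weighted coefficient sums $\sum_j|b_j|\,j^{m}$. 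With $|b_j|\leq Cj^{-p}$ these converge iff $m<p-1$, which is exactly the threshold producing $k=\lfloor p-1\rfloor$ with $\gamma<p-1-k$ (and $k=p-2$, any $\gamma$, in the integer case) of \eqref{eq:k}; for the global Lipschitz norm only $\sum_j|b_j|<\infty$ is needed, which holds for all $p>1$. The part demanding the most care is matching the residual H\"older exponent $\gamma<p-1-k$ to the leftover summability margin and checking that the composition and Fa\`a di Bruno bookkeeping introduce no hidden growth in the number $d$ of active variables.
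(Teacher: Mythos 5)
Your overall architecture coincides with the paper's: you factor the point value through the nominal configuration as $u_{\text{ell}}(\By;\Bx_0)=\hat{u}_{\text{ell}}(\By;\Phi^{-1}(\By;\Bx_0))$, invoke parametric holomorphy of $\hat{u}_{\text{ell}}$ and $\Phi^{-1}$ from \cite{CNT,HPS} together with Schauder-type interface regularity in space, and obtain (i) by composition. For (ii) your route (continuity across $\hat{\Gamma}$ from the transmission condition plus bounded one-sided gradients gives spatial Lipschitz continuity, then compose Lipschitz maps) differs in detail from the paper's, which identifies $C^{0,1}(\overline{\mathcal{P}_d})$ with $W^{1,\infty}(\mathcal{P}_d)$ by convexity of the parameter box and uses that $\mathcal{P}_d^{\Gamma,\Bx_0}$ has measure zero; the two arguments are equivalent in substance, and yours has the merit of making the role of $\llbracket u_{\text{ell}}\rrbracket_{\Gamma(\By)}=0$ explicit.

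The genuine flaw is in the $d$-uniformity step. You assert that the spatial $C^{k,\gamma}$-norms of $\hat{u}_{\text{ell}}$ are ``uniform in $d$ because Assumption \ref{ass:radius} bounds $\lVert r(\By;\cdot)\rVert_{C^1_{per}([0,2\pi))}$ uniformly in $\By$ and $d$.'' This is wrong on two counts. First, Assumption \ref{ass:radius} with $p>1$ only yields a $d$-uniform $C^0$-bound on the radius (as the paper states); since $\lVert\psi_j\rVert_{C^1}\sim j$, the $C^1$-norm $\lVert r(\By;\cdot)\rVert_{C^1}\lesssim \lVert r_0\rVert_{C^1}+\sum_j |b_j|\, j$ is $d$-uniform only when $p>2$. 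Second, and more importantly, even a uniform $C^1$-bound on $r$ cannot produce $d$-uniform spatial $C^{k,\gamma}$-bounds on $\hat{u}_{\text{ell}}$: the Schauder estimate requires $\hat{\alpha}\in C^{k-1,\gamma}$ on each nominal subdomain, hence $\Phi\in C^{k,\gamma}$, hence a $d$-uniform bound on $\lVert r(\By;\cdot)\rVert_{C^{k,\gamma}}\sim\sum_j|b_j|\,j^{k+\gamma}$, which converges precisely when $k+\gamma<p-1$; this, combined with uniform coercivity, is exactly where the paper derives \eqref{eq:k}. You instead locate the threshold only in the Fa\`a di Bruno sums arising from the explicit parametric differentiation of $\hat{\alpha}$ and of the moving point $\Phi^{-1}(\By;\Bx_0)$. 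Those sums do obey the same threshold, but they are not a substitute for uniform spatial regularity of $\hat{u}_{\text{ell}}$ itself, which is needed whenever $\By$-derivatives land on the moving evaluation point. The repair is immediate --- impose $k+\gamma<p-1$ in the spatial channel as well, as the paper does --- but as written your justification of the key uniform bound does not stand.
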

\begin{proof}
We first sketch the proof of $(i)$, referring to the proof of Proposition 1 in \cite{LSMLMC} for details. 

We write $u_{\text{ell}}(\By;\Bx_0)=\hat{u}_{\text{ell}}(\By;\Phi^{-1}(\By;\hat{x_0}))$, $\By\in \mathcal{P}_d$. It has been shown in \cite{HPS,HSSS} that $\Phi^{-1}(\By;\cdot)$ and $\hat{u}_{\text{ell}}(\By;\cdot)$ admit a holomorphic extension to the complex plane, so, for the regularity of $u_{\text{ell}}$ with respect to $\By$, it remains to check the regularity of $\hat{u}_{\text{ell}}$ with respect to the spatial variable. We also note that, if $\mathcal{P}_d^{\Gamma,\Bx_0}$ is not empty, then, due to our assumptions on the mollifier $\chi$, $\Bx_0$ and $\Phi^{-1}(\By;\Bx_0)$ are contained in a circle of radius $\tilde{R}$ inside the domain. This means that it is sufficient to prove smoothness of $\hat{u}_{\text{ell}}$ with respect to the spatial variable up to the boundary of the circle only and not in the whole domain. For every $d\in\mathbb{N}$, the sum in \eqref{eq:radiusy} is infinitely differentiable, and in particular the radius $r$ belongs to $C^{k,\gamma}_{per}([0,2\pi))$ for every $k\geq 1$ and $\gamma\in (0,1)$ such that $r_0\in C^{k,\gamma}_{per}([0,2\pi))$. It follows that the mapping $\Phi$ and its inverse belong to $C^{k,\gamma}(\overline{\hat{D}_i})\cup C^{k,\gamma}(\overline{\hat{D}_o})$, therefore $\hat{\alpha}(\By;\cdot)$ belongs to $C^{k-1,\gamma}(\overline{\hat{D}_i})\cup C^{k-1,\gamma}(\overline{\hat{D}_o})$, for every $\By\in\mathcal{P}_d$. Furthermore, given the assumptions on $f$, $\hat{f}(\By;\cdot)$ belongs to $C^{k-2,\gamma}(\overline{\hat{D}_i})\cup C^{k-2,\gamma}(\overline{\hat{D}_o})$ for $k\geq 2$ and to $C^{0,\gamma}(\overline{\hat{D}_i})\cup C^{0,\gamma}(\overline{\hat{D}_o})$ for $k=1$, for every $\By\in\mathcal{P}_d$. Proceeding as in the proof of Proposition 1 in \cite{LSMLMC}, we use Lemma 2 in \cite{Joc} and Theorem 3.1 in \cite{BGL} on $D$ to obtain a bound on $\lVert\hat{u}_{\text{ell}}(\By,\cdot)\rVert_{C^0(\overline{D})}$. We combine this with the regularity theorems, Theorem 8.33 in \cite{GT} for $k=1$ and Theorem 6.19 in \cite{GT} for $k\geq 2$, on the circular region $K_{\tilde{R}}$ of radius $\tilde{R}$, to obtain that $\hat{u}_{\text{ell}}(\By;\cdot)$ belongs to $C^{k,\gamma}(\overline{\hat{D}_i})\cup C^{k,\gamma}(\overline{\hat{D_o}\cap K_{\tilde{R}}})$, for every $\By\in\mathcal{P}_d$ (the theorems in \cite{GT} are for an elliptic problem without interface, but they can be adapted to elliptic interface problems \cite{LSMLMC}). Consequently, $\hat{u}_{\text{ell}}(\By;\Phi^{-1}(\By;\Bx_0))$ belongs to $C^{k,\gamma}(\overline{\mathcal{P}_d^{+,\Bx_0}})\cup C^{k,\gamma}(\overline{\mathcal{P}_d^{-,\Bx_0}})$. If $p>2$ in Assumption \ref{ass:radius}, then for $k$ and $\gamma$ as in \eqref{eq:k} the $C^{k,\gamma}$-norm of the radius is bounded independently of the dimension $d$ and, together with uniform coercivity, this implies a $d$-independent bound on the $C^{k,\gamma}$-norm of $\hat{u}_{\text{ell}}$ on $\hat{D}_i$ and $\hat{D}_o$ and therefore on the H\"older norm of $u_{\text{ell}}(\cdot;\Phi^{-1}(\cdot;\Bx_0))$ on $\overline{\mathcal{P}_d^{+,\Bx_0}}$ and $\overline{\mathcal{P}_d^{-,\Bx_0}}$.

For $(ii)$, we notice that, since $\mathcal{P}_d$ is a convex domain, the space $C^{0,1}(\overline{\mathcal{P}_d})$ coincides with $W^{1,\infty}(\mathcal{P}_d)$ \cite[Thm. 4.1]{H}. From part $(i)$ we have that $u_{\text{ell}}(\By;\Phi^{-1}(\By;\Bx_0))$ is $C^k$ with $k\geq 1$ on $\overline{\mathcal{P}}_d^{+,\Bx_0}$ and $\overline{\mathcal{P}}_d^{-,\Bx_0}$, so in particular it belongs to $W^{1,\infty}(\mathcal{P}_d^{+,\Bx_0})$ and $W^{1,\infty}(\mathcal{P}_d^{-,\Bx_0})$ (we remind that when we defined $r_0$ we required it to be at least $C^1$-continuous). Since $\mathcal{P}_d^{\Gamma,\Bx_0}$ is a zero measure set, we have that $\hat{u}_{\text{ell}}(\By;\Phi^{-1}(\By;\Bx_0))$ belongs to $W^{1,\infty}(\mathcal{P}_d)$, with norm bounded independently of $d$ if its H\"older norms in $\overline{\mathcal{P}^{+,\Bx_0}}$ and $\overline{\mathcal{P}_d^{-,\Bx_0}}$ are.
\end{proof}

Let $\mathcal{C}$ be the continuity constant of the bilinear form $a_p(v,w):=\langle \hat{\nabla}\hat{v},\hat{\nabla}\hat{w}\rangle - \langle \operatorname{DtN}(\hat{v}),\hat{w}\rangle_{\langle H^{-\frac{1}{2}}(\partial D),H^{\frac{1}{2}}(\partial D)\rangle}$ on $H^1(D)$, and let $\gamma_p$ be the coercivity constant of $a_p$ restricted to functions that satisfy the radiation condition. Moreover, we denote by $\sigma_{min},\sigma_{max}$ the $\By$- and $\hat{\Bx}$-independent lower and upper bounds on the singular values of $D\Phi$ (we note that \eqref{eq:radiusy} ensures a $\By$-independent bound on $\lVert r(\By)\rVert_{C^1_{per}([0,2\pi))}$ and thus on the singular values of $D\Phi$).

\begin{proposition} \label{prop:contyhelm}
Let Assumptions \ref{ass:radius} and \ref{ass:nontrapping} be fulfilled, and let us assume that we can build the mapping $\Phi$ in \eqref{eq:phicircle} such that $\frac{\sigma_{min}^4}{\sigma_{max}^4}\min\left\{\frac{1}{\alpha_i},\alpha_i\right\}\geq 1-\frac{\gamma_p}{\mathcal{C}}$.  Then, for every $d\in\bbN$, the map $\By\mapsto u_{\text{\emph{Helm}}}(\By;\Bx_0)$ from $\mathcal{P}_d$ to $\mathbb{C}$ has the following properties:
\begin{enumerate}[(i)]
\item it is H\"older continuous in $\overline{\mathcal{P}_{d}^{+,\Bx_0}}$ and $\overline{\mathcal{P}_{d}^{-,\Bx_0}}$, that is, it belongs to $C^{k,\gamma}(\overline{\mathcal{P}_{d}^{+,\Bx_0}})\cup C^{k,\gamma}(\overline{\mathcal{P}_{d}^{-,\Bx_0}})$, for every $k\geq 1$ and every $\gamma\in(0,1)$ such that $r_0\in C^{k,\gamma}_{per}([0,2\pi))$;
\item it is globally Lipschitz continuous, that is, it belongs to $C^{0,1}(\overline{\mathcal{P}_d})$;
\item if $\alpha_i=1$ in \eqref{eq:alphahat}, it belongs to $C^{1}(\overline{\mathcal{P}_d})$.
\end{enumerate}
 If $p>2$ in Assumption \ref{ass:radius}, then $\lVert u_{\text{\emph{Helm}}}(\cdot;\Bx_0)\rVert_{C^{k,\gamma}(\overline{\mathcal{P}_{d}^{+,\Bx_0}})\cup C^{k,\gamma}(\overline{\mathcal{P}_{d}^{-,\Bx_0}})}$ has a $d$-independent upper bound for $k$ and $\gamma$ as in \eqref{eq:k}; moreover, $ \lVert u_{\text{\emph{Helm}}}(\cdot;\Bx_0)\rVert_{C^{0,1}(\overline{\mathcal{P}_{d}})}$, and, for $\alpha_i=1$, $ \lVert u_{\text{\emph{Helm}}}(\cdot;\Bx_0)\rVert_{C^{1}(\overline{\mathcal{P}_{d}})}$,  have a $d$-independent upper bound.
\end{proposition}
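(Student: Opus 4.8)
The plan is to mirror the proof of Proposition \ref{prop:contyell}, since the geometric source of the non-smoothness is the same; the two genuinely new features are the sign-indefiniteness of the Helmholtz operator, which forces the a priori bounds to be obtained from the structural hypotheses rather than from plain coercivity, and the extra conclusion (iii). As before I would write $u_{\text{Helm}}(\By;\Bx_0)=\hat u_{\text{Helm}}(\By;\Phi^{-1}(\By;\Bx_0))$ and recall from \cite{HSSS} that both $\Phi^{-1}(\By;\cdot)$ and the nominal field $\hat u_{\text{Helm}}(\By;\cdot)$ admit a holomorphic extension in $\By$. Consequently the only obstruction to $\By$-regularity is the spatial regularity of $\hat u_{\text{Helm}}$ across $\hat\Gamma$, transported along the analytic curve $\By\mapsto\Phi^{-1}(\By;\Bx_0)$, which lies in $\overline{\hat D_i}$ for $\By\in\overline{\mathcal P_d^{-,\Bx_0}}$ and in $\overline{\hat D_o}$ for $\By\in\overline{\mathcal P_d^{+,\Bx_0}}$.

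For (i) I would first secure a $\By$- and $d$-uniform $L^\infty$-bound on $\hat u_{\text{Helm}}(\By;\cdot)$: Assumption \ref{ass:nontrapping} supplies a $\By$-independent $H^1_\kappa$-bound, while the hypothesis $\frac{\sigma_{min}^4}{\sigma_{max}^4}\min\{1/\alpha_i,\alpha_i\}\geq 1-\gamma_p/\mathcal C$ guarantees that the mapped problem \eqref{eq:varformhp} is well posed with $\By$- and $d$-uniform stability constant. Writing the zeroth-order term $\hat\kappa^2\hat u_{\text{Helm}}$ as a datum on the right-hand side and splitting into real and imaginary parts reduces the interior regularity question to an elliptic interface problem of exactly the type treated in Proposition \ref{prop:contyell}; the same chain of results---\cite[Lem. 2]{Joc} and \cite[Thm. 3.1]{BGL} for the $C^0$-bound, then the Schauder estimates \cite[Thm. 6.19, Thm. 8.33]{GT} on the disk $K_{\tilde R}$---then gives $\hat u_{\text{Helm}}(\By;\cdot)\in C^{k,\gamma}(\overline{\hat D_i})\cup C^{k,\gamma}(\overline{\hat D_o\cap K_{\tilde R}})$, using that $\hat\alpha(\By;\cdot)$ and $\hat\kappa^2(\By;\cdot)$ are $C^{k-1,\gamma}$ on each subdomain (inherited from $\Phi$, hence from $r_0$). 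Composing with the holomorphic curve yields (i), and when $p>2$ the $d$-independence of $\lVert r(\By)\rVert_{C^{k,\gamma}}$ together with the uniform stability constant produces the claimed $d$-independent bound for $k,\gamma$ as in \eqref{eq:k}.

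Part (ii) is then identical to its elliptic counterpart: the transmission condition $\llbracket\hat u_{\text{Helm}}\rrbracket_{\hat\Gamma}=0$ renders the evaluation continuous across the measure-zero hyperplane $\mathcal P_d^{\Gamma,\Bx_0}$, so the map is $C^{k}$ ($k\geq1$) on each closed half of the convex set $\mathcal P_d$ and globally continuous; by \cite[Thm. 4.1]{H} this places it in $W^{1,\infty}(\mathcal P_d)=C^{0,1}(\overline{\mathcal P_d})$, with $d$-independent norm whenever the one-sided Hölder norms are.

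I expect (iii) to be the main obstacle, because the gain of a full derivative across the interface is a \emph{physical} phenomenon that is obscured in the nominal configuration. When $\alpha_i=1$ the principal coefficient is constant across $\Gamma(\By)$, so $\llbracket\nabla u_{\text{Helm}}\cdot\Bn\rrbracket_{\Gamma(\By)}=0$ together with continuity of $u_{\text{Helm}}$ forces the entire physical gradient $\nabla u_{\text{Helm}}$ to be continuous across $\Gamma(\By)$ (only second derivatives jump, through the $\kappa^2$-jump). In the nominal configuration, however, $\hat\nabla\hat u_{\text{Helm}}$ still jumps across $\hat\Gamma$, because $D\Phi$ is merely Lipschitz (the mollifier $\chi$ has a kink at $\hat\Gamma$) and hence $\hat\alpha$ jumps even for $\alpha_i=1$; so I cannot simply read off $C^1$-regularity of $\hat u_{\text{Helm}}$ in space. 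The resolution is to differentiate the composition directly: for the $j$-th parameter, $\partial_{y_j}[\hat u_{\text{Helm}}(\By;\Phi^{-1}(\By;\Bx_0))]=(\partial_{y_j}\hat u_{\text{Helm}})(\By;\Phi^{-1}(\By;\Bx_0))+\hat\nabla\hat u_{\text{Helm}}\cdot\partial_{y_j}\Phi^{-1}(\By;\Bx_0)$, and the identity $D\Phi\,\partial_{y_j}\Phi^{-1}=-\partial_{y_j}\Phi$ obtained from $\Phi\circ\Phi^{-1}=\mathrm{id}$, combined with $\hat\nabla\hat u_{\text{Helm}}=D\Phi^\top\nabla_{\Bx}u_{\text{Helm}}$, turns the second term into $-\nabla_{\Bx}u_{\text{Helm}}(\By;\Bx_0)\cdot\partial_{y_j}\Phi(\By;\Phi^{-1}(\By;\Bx_0))$. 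Here $\partial_{y_j}\Phi=\chi\,b_j\psi_j(\hat\varphi)\,\hat\Bx/\lVert\hat\Bx\rVert$ involves $\chi$ but \emph{not} its radial derivative, so it is continuous across $\hat\Gamma$, and $\nabla_{\Bx}u_{\text{Helm}}$ is continuous across $\Gamma(\By)$ by the argument above; the first term is continuous across $\hat\Gamma$ because differentiating $\llbracket\hat u_{\text{Helm}}\rrbracket_{\hat\Gamma}=0$ in $\By$ removes its jump. Hence the two one-sided $\By$-gradients agree on $\mathcal P_d^{\Gamma,\Bx_0}$, upgrading the regularity from $C^{0,1}$ to $C^{1}(\overline{\mathcal P_d})$, with the $d$-independent bound for $p>2$ following as in (i)--(ii).
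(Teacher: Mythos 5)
Your proposal is correct in substance and follows the route the paper itself prescribes: the paper's proof of Proposition \ref{prop:contyhelm} is only a pointer to Proposition 1 of \cite{LSMLMC} and to the proof of Proposition \ref{prop:contyell}, and your argument fleshes out exactly that strategy (compose $\hat u_{\text{Helm}}$ with the holomorphic maps from \cite{HSSS}, bootstrap spatial H\"older regularity on each side of $\hat\Gamma$ by treating $\hat\kappa^2\hat u_{\text{Helm}}$ as data for an elliptic interface problem, then transfer to parameter space). Your part (iii), for which the paper gives no details at all, is a sound self-contained argument: the continuity of the physical gradient across $\Gamma(\By)$ when $\alpha_i=1$, the fact that $\partial_{y_j}\Phi$ involves $\chi$ but not its radial derivative, and the identity $\llbracket\partial_{y_j}\hat u_{\text{Helm}}\rrbracket_{\hat\Gamma}=0$ (valid because $\hat\Gamma$ is $\By$-independent) are all correct and combine as you claim.

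One attribution should be repaired, because the paper explicitly flags it and your proposal inverts it. The paper remarks that the technical hypothesis $\frac{\sigma_{min}^4}{\sigma_{max}^4}\min\left\{\frac{1}{\alpha_i},\alpha_i\right\}\geq 1-\frac{\gamma_p}{\mathcal{C}}$ is used to prove \emph{global} continuity, i.e.\ part (ii), and is \emph{not} needed for the one-sided H\"older continuity of part (i): for (i), Assumption \ref{ass:nontrapping} already supplies the $\By$-uniform a priori bound that starts your bootstrap, so invoking well-posedness of \eqref{eq:varformhp} there is superfluous. Conversely, your justification of (ii) --- that the transmission condition ``renders the evaluation continuous across the measure-zero hyperplane'' --- is not sufficient as literally stated. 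Spatial continuity of each fixed-$\By$ solution across $\hat\Gamma$ shows that the two one-sided limits of $\By\mapsto\hat u_{\text{Helm}}(\By;\Phi^{-1}(\By;\Bx_0))$ on $\mathcal{P}_d^{\Gamma,\Bx_0}$ coincide only \emph{in combination with} sup-norm continuity of the parameter-to-solution map $\By\mapsto\hat u_{\text{Helm}}(\By;\cdot)$ near the hyperplane; without that, the value of $\hat u_{\text{Helm}}(\By;\cdot)$ itself could drift as $\By$ crosses $\mathcal{P}_d^{\Gamma,\Bx_0}$, and piecewise $W^{1,\infty}$ plus a measure-zero interface does not give global Lipschitz continuity. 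That parametric stability is exactly what the technical hypothesis buys (a $\By$- and $d$-uniform stability constant for the mapped sesquilinear form), which is why the paper attaches it to (ii). Since you did establish this stability --- just one part too early --- your proof closes once the two halves are wired together correctly; but as written, part (ii) reads as if it were free of the structural hypothesis, and it is not.
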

The proof is analogous to the proof of Proposition 1 in \cite{LSMLMC} and the proof of Proposition \ref{prop:contyell} for the Lipschitz continuity. We recall from \cite{LSMLMC} that the technical assumption $\frac{\sigma_{min}^4}{\sigma_{max}^4}\min\left\{\frac{1}{\alpha_i},\alpha_i\right\}\geq 1-\frac{\gamma_p}{\mathcal{C}}$ is used to prove global continuity but not H\"older continuity in  $\mathcal{P}_{d}^{+,\Bx_0}$ and $\mathcal{P}_{d}^{-,\Bx_0}$.

Due to the non-smooth dependence of the point evaluation on the physical domains across sets of the kind \eqref{eq:hyperplane}, high-order methods for the computation of surrogates for this quantiy of interest perform very poorly, especially when considering more point evaluations simultaneously, that is when we have many surfaces of non-smoothness \cite{LSMLMC}. For this reason, in this paper we suggest to use neural networks with ReLU activation function to build surrogates for the maps $\By\mapsto u_{\ast}(\By;\Bx_0)$, $\ast=$ {\scriptsize{ell, Helm}}.

\section{Deep neural network surrogates}\label{sec:NN}

We first introduce, in Subsection \ref{ssec:NNdef}, the basic definitions and the notation for neural networks. In Subsection \ref{ssec:NNtheory}, we review the main facts about neural network approximation of non-smooth functions and their implications in the approximation of the maps $\By\mapsto q(\By)$, $\By\in\mathcal{P}_d$, when $q$ is a set of point evaluations for the solution to \eqref{eq:ep} or \eqref{eq:hp}.

\subsection{Neural network preliminaries and notation}\label{ssec:NNdef}

In this paragraph we follow the presentations in \cite{KPSa,KPSb,PV}, from which most of the following definitions are taken.

\begin{definition}
For $d$ and $L\in\bbN$, a n\emph{eural network} $\nn$ with input dimension $d$ and $L$ layers is a sequence of matrix-vector tuples
\begin{equation*}
\nn = ((\VA_1,\Vb_1),\ldots,(\VA_L,\Vb_L)),
\end{equation*}
where, for every $\ell=1,\ldots,L$, $\VA_{\ell}\in \mathbb{R}^{N_{\ell}\times N_{\ell-1}}$, $\Vb_{\ell}\in\mathbb{R}^{N_{\ell}}$ and $N_{\ell}\in\mathbb{N}$, and with $N_0:=d$ the input dimension and $N_L$ the output dimension. The elements of $\VA_{\ell}$ and $\Vb_{\ell}$, for $\ell=1,\ldots,L$, are called the \emph{weights} of the neural network.
\end{definition}
\begin{definition}
Given a neural network $\nn$, $\mathcal{P}_d\subset \mathbb{R}^d$ and a nonlinear function $\rho: \mathbb{R}\rightarrow\mathbb{R}$, we define the \emph{realization of $\nn$ with activation function $\rho$ over $\mathcal{P}_d$} as the map $\mathrm{R}_{\rho}(\nn): \mathcal{P}_d\rightarrow \mathbb{R}^{N_L}$ such that $\mathrm{R}_{\rho}(\nn)(\By)=\Vz_L$, where $\Vz_L$ is obtained by the recursion
\begin{align*}
\Vz_0 &:= \By,\\
\Vz_{\ell} &:= \rho(\VA_{\ell} \Vz_{\ell-1}+\Vb_{\ell}),\quad \ell=1,\ldots,L-1,\\
\Vz_L &:= \VA_L \Vz_{L-1}+\Vb_L.
\end{align*}
Here $\rho$ acts componentwise, that is $\rho(\Vv)_i=\rho(\text{\emph{v}}_i)$ for $i=1,2,\ldots$. We refer to $(N_0,\ldots,N_L)$ as the \emph{architecture} of $\nn$.
\end{definition}

Neural networks are called deep when $L>2$. Additionally, we refer to $N(\nn)=d +\sum_{\ell=1}^L N_{\ell}$ as the number of neurons and we denote by $M(\nn)=\sum_{\ell=1}^L \lVert\VA_{\ell}\rVert_0 + \lVert\Vb_{\ell}\rVert_0$ the number of non-zero weights. 

In this work we focus on the following activation functions.
\begin{definition}
For $\beta\in[0,1)$, $\rho_{\beta}(x):=\max\left\{\beta x,x\right\}$, for $x\in\mathbb{R}$, is the \emph{$\beta$-leaky rectified linear unit}, denoted $\beta$-LReLU. The function $\rho_0:=\max\left\{0,x\right\}$, for $x\in\mathbb{R}$, is called the \emph{rectified linear unit}, denoted ReLU.
\end{definition}

In the theoretical results in the next subsection, we restrict ourselves to the ReLU activation function. However, in the numerical experiments we use a leaky-ReLU activation function because its non-vanishing derivative for $x<0$ can help avoiding the occurrence of dead neurons in the training process \cite{KPSb}. In \cite{KPSb} it has been noted that, up to a multiplicative constant, the parameter $\beta$ of the leaky-ReLU activation function does not influence the approximation properties of neural network realizations. 

\smallskip

The approximation quality of neural network surrogates depend crucially on the choice of the weights $\VA_{\ell}, \Vb_{\ell}$, $\ell=1,\ldots,L$. These are selected through an optimization procedure in the training phase. More precisely, one first computes a set of $N_t$ input/output pairs, the \emph{training data}. In the framework described in the previous sections, our training data are parameter/QoI pairs $\left\{\By_n,q(\By_n)\right\}_{n=1}^{N_t}$, where $\By_n$ denotes the $n$-th realization of the parameter $\By$, sampled according to the multivariate uniform distribution, and $q(\By_n)=\left\{u_{\text{ell}}(\By_n;\Bx_i)\right\}_{i=1}^{N_p}$ or $q(\By_n)=\left\{\left|u_{\text{Helm}}(\By_n;\Bx_i)\right|\right\}_{i=1}^{N_p}$. Given the training set, we define the \emph{loss function} 
\begin{equation}\label{eq:loss}
\mathcal{L}_p(\VA_1,\Vb_1,\ldots,\VA_L,\Vb_L) := \frac{1}{N_t}\sum_{n=1}^{N_t} \left| q(\By_n)-R_{\rho}(\nn)(\By_n)\right|_X^p,
\end{equation}
for $1\leq p<\infty$ (for $p=\infty$, the sum over the samples is replaced by the maximum over them). In the next section and in the numerical experiments, we consider $p=2$. The error measure $\left|\cdot\right|_X$ depends on the space $X$ on which the quantity of interest takes values. In our simulations, where $X=\bbR^{N_p}$, we choose it to be the relative error in the Euclidean norm, that is, 
\begin{equation}\label{eq:l2relative}
\left| q(\By_n)-R_{\rho}(\nn)(\By_n)\right|_X^2=\frac{\lVert q(\By_n)-R_{\rho}(\nn)(\By_n)\rVert_{\mathbb{R}^{N_p}}^2}{\lVert q(\By_n)\rVert_{\mathbb{R}^{N_p}}^2}
\end{equation}
 ($\lVert\cdot\rVert_{\mathbb{R}^{N_p}}$ denoting the Euclidean norm in $\mathbb{R}^{N_p}$). The task of the training process is to optimize the neural network weights so as to minimize the loss function. This is a non-convex minimization problem over a high-dimensional set. Therefore, it is not uncommon in machine learning to add a regularization to the loss function \cite[Ch. 7]{Goodfellow}. Following \cite{KPSb}, in our numerical experiments we do not use a regularization, but we use multiple starting values for each optimization task. We note that different test cases might require different regularization parameters, and this would make comparisons between different settings more problematic. The optimization problem is solved iteratively with a gradient descent algorithm. In our simulations, we use the ADAM optimizer \cite{KL}, which is a variant of stochastic gradient descent (SGD).

\subsection{Theoretical results on approximation of non-smooth quantities of interest and application to point evaluation}\label{ssec:NNtheory}

The error between a neural network surrogate $R_{\rho}(\nn)$ and the quantity $q(\By)$ that we want to predict has three different sources: the \textsl{optimization error}, due to the iterative algorithm used to solve the non-convex optimization problem for the weights, the \textsl{approximation error}, due to the choice of the hypothesis space, that is the choice of the neural network architecture and the activation function, and the \textsl{generalization gap}, due to the fact that we have access to a finite number of realizations of $q(\By)$ to define our loss functional. In this section, we focus on the approximation error, for which specific results for the approximation of non-smooth functions are available. In particular, we review error estimates for neural network approximation of \textsl{piecewise smooth} functions defined on a \textsl{high-dimensional} domain. We then apply the estimates to our model problems.

Universal approximations theorems show that shallow neural networks are able to approximate continuous \cite{Cyb} and even only measurable \cite{Horn} functions. However, these results are not contructive and, in addition, deep neural networks are known to perform better than shallow networks with a comparable number of weights \cite{DB,Raghu,Tel}. 

More recently, Yarotsky showed in \cite{Yar18} an error estimate on the approximation of continuous functions by deep neural networks depending on the modulus of continuity of the function. More precisely, he showed that there exists a sequence of architectures with ReLU activation function and depths $O(M)$, with $M$ the number of weights, such that the error between a quantity $q$ and its best neural network approximation is bounded by 
\begin{equation*}
\lVert q - \mathcal{R}_{\rho}(\nn)\rVert_{L^{\infty}([-1,1]^d)}\leq C_1 \omega_q(C_2 M^{-\frac{2}{d}}),
\end{equation*}
for some constants $C_1$, $C_2$ possibly depending on $d$ but not on $M$ or $q$. In our case, this means that we can construct a neural network with ReLU activation function which approximates $q(\By)=u_{\text{ell}}(\By;\Bx_0)$ or $q(\By)=\left|u_{\text{Helm}}(\By;\Bx_0)\right|$ as 
\begin{equation*}
\lVert q - \mathcal{R}_{\rho}(\nn)\rVert_{L^{\infty}([-1,1]^d)}\leq C M^{-\frac{2}{d}},
\end{equation*}
thanks to the Lipschitz continuity proved in Propositions \ref{prop:contyell} and \ref{prop:contyhelm} (in the elliptic problem, provided $f$ is H\"older continuous). In the equation above, $C$ can depend on $d$. We also mention that in \cite{Yar19} the authors showed that a neural network having both ReLU and sine activation functions can approximate H\"older continuous functions to almost exponential accuracy.

However, the quantity of interest that we want approximate is not only Lipschitz continuous, but it has a precise structure, namely, it is piecewise smooth. For this reason, we rely on the results in \cite{PV} and deduce from them the consequences for our case. The theory in \cite{PV} is more general than in the context to which we apply it, in that it allows not only for jumps in the derivative but also for discontinuous dependence of the QoI with respect to the parameter. The results in \cite{PV} have been further generalized in \cite{IF}, which analyses the generalization error, allowing for noise in the realizations of the quantity of interest.

Following \cite{PV}, we first introduce a set of smooth functions which allows us to define \textsl{horizon functions}. For $B>0$ and $\beta=k+\gamma$, $k\in\mathbb{N}_0$ and $\gamma\in (0,1]$, we define 
\begin{equation}
\mathcal{F}_{\beta,d,B} = \left\{g \in C^k \left(\overline{\mathcal{P}_d}\right): \lVert g\rVert_{C^{k,\gamma}(\overline{\mathcal{P}_d})}\leq B\right\}.
\end{equation}
For $\beta=k+1$, we require $g$ and all its derivatives up to order $k$ to be Lipschitz continuous, rather than $g$ being $(k+1)$-times differentiable. In the following definition, $\mathbbm{1}_{A}$ denotes the indicator function of some set $A$.
\begin{definition}[{\cite[Def. 3.3]{PV}}]
Let $d\in\mathbb{N}$, $d\geq 2$, and let $\beta, B>0$. Furthermore, let $H:=\mathbbm{1}_{[0,\infty)\times \mathbb{R}^{d-1}}$ be the Heaviside function (acting on the first coordinate). We define
\begin{equation}\label{eq:horizonfct}
\mathcal{H}\mathcal{F}_{\beta,d,B}=\left\{g\circ T \in L^{\infty}\left([-1,1]^d\right): g(x) = H(x_1 + \tau(x_2,\ldots,x_d),x_2,\ldots,x_d), \tau\in\mathcal{F}_{\beta,d-1,B}, T\in \Pi(d,\mathbb{R})\right\},
\end{equation}
where $\Pi(d,\mathbb{R})\subset GL(d,\mathbb{R})$ is the group of permutation matrices on $\mathbb{R}^d$.
\end{definition}
We introduce then the set of domains with smooth boundaries as follows. For $s\in\mathbb{N}_0$, $d\geq 2$ and $\beta,B>0$, we define \cite{PV}
\begin{equation}
\mathcal{K}_{s,\beta,d,B}:=\left\{K\subset [-1,1]^d: \forall \By\in [-1,1]^d \;\exists g_{\By}\in\mathcal{H}\mathcal{F}_{\beta,d,B}: \mathbbm{1}_K = g_{\By} \text{ on } [-1,1]^d\cap \overline{B_{2^{-s}}}^{\lVert\cdot\rVert_{\ell^{\infty}}}\right\},
\end{equation}
where $\overline{B_{2^{-s}}}^{\lVert\cdot\rVert_{\ell^{\infty}}}$ denotes the closure of the ball of center $(0,\ldots,0)$ and radius $2^{-s}$ in the $\ell^{\infty}$-norm. Finally, we introduce the set of piecewise smooth functions as follows. For $r\in\mathbb{N}_0$, $d\geq 2$ and $\beta, B>0$, we define \cite{PV}
\begin{equation*}
\mathcal{E}_{s,\beta,d,B,P}:=\left\{g = \sum_{p=1}^P\mathbbm{1}_{K_p}\cdot g_p: g_p\in\mathcal{F}_{\beta,d,B}\text{ and } K_p\in\mathcal{K}_{s,\beta,d,B}\text{ for }p=1,\ldots,P\right\}
\end{equation*} 
(in \cite{PV}, the $g_p$, $p=1,\ldots,P$, are allowed to be slightly less smooth, but we do not need it here).
We are ready now to state the approximation result from \cite{PV} about approximation of piecewise smooth functions. In the following, by $(\sigma,\varepsilon)$-quantized weights it is meant weights which are all elements of $[-\varepsilon^{-\sigma},\varepsilon^{-\sigma}]\cap 2^{-\sigma\lceil\log_2(1/\varepsilon)\rceil}\mathbb{Z}$.
\begin{theorem}[{\cite[Cor. 3.7]{PV}}]\label{thm:pv}
Let $s\in\mathbb{N}_0, d\geq 2$ and $\beta, B>0$. There exist constants $c=c(d,\beta,s,B,P)>0$ and $\sigma=\sigma(d,\beta,s,B)\in\mathbb{N}$ such that, for all $\varepsilon\in(0,\frac{1}{2})$ and all $g\in\mathcal{E}_{s,\beta,d,B,P}$, there is a neural network $\mathcal{N}\mathcal{N}_{\varepsilon}^g$ with at most $(4+\lceil\log\beta \rceil)\cdot (12+\frac{3\beta}{d})$ layers, ReLU activation function $\rho_0$, and at most $c\varepsilon^{-2(d-1)/\beta}$ nonzero, $(\sigma,\varepsilon)$-quantized weights, such that
\begin{equation*}
\lVert R_{\rho_0}(\mathcal{N}\mathcal{N}_{\varepsilon}^g)-g \rVert_{L^2([-1,1]^d)}\leq \varepsilon \quad\text{ and }\quad \lVert R_{\rho_0}(\mathcal{N}\mathcal{N}_{\varepsilon}^g)\rVert_{L^{\infty}([-1,1]^d)}\leq \lceil B\rceil.
\end{equation*}
\end{theorem}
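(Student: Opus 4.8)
The plan is to build the network compositionally, exploiting the additive and multiplicative structure of the class $\mathcal{E}_{s,\beta,d,B,P}$. Since any $g\in\mathcal{E}_{s,\beta,d,B,P}$ is a sum $\sum_{p=1}^P\mathbbm{1}_{K_p}\cdot g_p$ with $P$ a fixed constant, it suffices to approximate a single product $\mathbbm{1}_{K}\cdot g_p$ to $L^2$-accuracy $\varepsilon/P$ and then run the $P$ resulting subnetworks in parallel on the shared input, summing their outputs in the last layer. Because such parallelization multiplies the weight count only by the constant $P$ and leaves the depth unchanged, the crux reduces to two tasks: approximating the smooth factor $g_p\in\mathcal{F}_{\beta,d,B}$, and approximating the characteristic function $\mathbbm{1}_{K}$ of a set $K\in\mathcal{K}_{s,\beta,d,B}$ with $\beta$-smooth boundary.

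For the smooth factor I would invoke the standard bounded-depth ReLU approximation theory for $C^{k,\gamma}$ functions: a function in $\mathcal{F}_{\beta,d,B}$ is approximated in $L^{\infty}([-1,1]^d)$ to accuracy $\delta$ by a ReLU network of depth $O(\log\beta)$ (independent of $\delta$) with $O(\delta^{-d/\beta})$ weights, built from a partition of unity of localized Taylor polynomials, the polynomials being realized by fixed-depth approximate-multiplication subnetworks. The treatment of $\mathbbm{1}_K$ is the heart of the argument and the step I expect to be the main obstacle. Within each $\ell^{\infty}$-ball of radius $2^{-s}$, the defining property of $\mathcal{K}_{s,\beta,d,B}$ lets me write $\mathbbm{1}_K=H(x_1+\tau(x_2,\ldots,x_d))$ after a permutation, with $\tau\in\mathcal{F}_{\beta,d-1,B}$; the local pieces are then glued by a ReLU partition of unity over the $2^{-s}$-patches. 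I would approximate the $(d-1)$-variate function $\tau$ by a bounded-depth ReLU net $\tilde\tau$ with $L^{\infty}$-error $\delta$ using $O(\delta^{-(d-1)/\beta})$ weights (this is where the exponent $d-1$, rather than $d$, enters), and replace the discontinuous Heaviside $H$ by the continuous ramp $R_t$ equal to $0$ for $z\le -t$, equal to $1$ for $z\ge t$, and linear in between, which needs only two ReLUs.

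The composed map $R_t\bigl(x_1+\tilde\tau(x_2,\ldots,x_d)\bigr)$ then agrees with $\mathbbm{1}_K$ outside a boundary slab $\{|x_1+\tau(x_2,\ldots,x_d)|\le C(t+\delta)\}$, whose Lebesgue measure is $O(t+\delta)$; hence the indicator is approximated in $L^2$ with error $O(\sqrt{t+\delta})$. Balancing the two scales by taking $t$ comparable to $\delta$ and setting $\sqrt{\delta}\sim\varepsilon$, i.e.\ $\delta\sim\varepsilon^2$, yields a boundary weight count $O(\delta^{-(d-1)/\beta})=O(\varepsilon^{-2(d-1)/\beta})$, exactly the claimed rate. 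Since $d\ge 2$ forces $2(d-1)\ge d$, the smooth-factor cost $O(\varepsilon^{-d/\beta})$ is dominated, so the total weight count is $O(\varepsilon^{-2(d-1)/\beta})$; and since each ingredient has $\varepsilon$-independent depth $O(\log\beta)$, their composition respects a depth bound of the stated form. The smooth and indicator approximations are then multiplied by a fixed-depth approximate-multiplication subnetwork (e.g.\ via $xy=\tfrac14((x+y)^2-(x-y)^2)$ together with a bounded-depth approximation of the square), whose accuracy is controlled through its width.

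Two routine wrap-up steps remain. First, weight quantization: since the network has bounded depth and weights that are polynomially bounded in $1/\varepsilon$, its realization is Lipschitz in its weights with a polynomially bounded constant, so rounding every weight onto the grid $[-\varepsilon^{-\sigma},\varepsilon^{-\sigma}]\cap 2^{-\sigma\lceil\log_2(1/\varepsilon)\rceil}\mathbb{Z}$ for a sufficiently large $\sigma=\sigma(d,\beta,s,B)$ perturbs the output by at most a controlled fraction of $\varepsilon$ in sup norm, hence in $L^2$; one simply redistributes the accuracy budget between the construction and the rounding. Second, the uniform bound: post-composing with the clamp $\min\{\max\{\,\cdot\,,-\lceil B\rceil\},\lceil B\rceil\}$, realizable with a few ReLUs, enforces $\lVert R_{\rho_0}(\nn_{\varepsilon}^g)\rVert_{L^{\infty}([-1,1]^d)}\le\lceil B\rceil$ and does not increase the $L^2$ distance to $g$, because $|g|\le B\le\lceil B\rceil$ almost everywhere. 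Collecting the weight counts, depths, quantization level and clamp then gives the constants $c$ and $\sigma$ with the asserted dependencies.
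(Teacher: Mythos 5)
First, a point of context: the paper does not prove this theorem at all --- it is imported verbatim from \cite[Cor.~3.7]{PV} (Petersen--Voigtlaender), so the only meaningful comparison is with that source's proof. Your blueprint does follow the same architecture as theirs in the rate-determining part: localize $\mathbbm{1}_{K_p}$ to horizon functions on $2^{-s}$-patches, approximate the $(d-1)$-variate interface function $\tau$ in sup norm to accuracy $\delta$, replace the Heaviside by a ramp of width $t$, bound the $L^2$ error by the measure of the mismatch slab to get error $O(\sqrt{t+\delta})$, and balance $t\sim\delta\sim\varepsilon^2$ to obtain the exponent $2(d-1)/\beta$; then parallelize over the $P$ pieces, observe that the smooth-factor cost $\varepsilon^{-d/\beta}$ is dominated since $d\geq 2$, quantize, and clamp. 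All of that is correct and is exactly why the rate in the theorem looks the way it does.

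There is, however, a genuine gap in your depth/weight accounting, and it concerns precisely the feature of the statement you treat as a formality: the factor $12+\frac{3\beta}{d}$ in the depth bound, which grows \emph{linearly} in $\beta$. You assert that every ingredient has $\varepsilon$-independent depth $O(\log\beta)$ and that the approximate multiplications are ``fixed-depth subnetworks whose accuracy is controlled through width.'' These two claims are incompatible with the stated weight count when $\beta$ is large. A ReLU network of depth $L$ with $W$ weights realizes a piecewise linear function with at most roughly $(W/L)^{L}$ pieces, while any piecewise linear approximation of $x\mapsto x^{2}$ on an interval to accuracy $\eta$ requires $\gtrsim\eta^{-1/2}$ pieces; hence a depth-$L$ approximate squaring (and thus multiplication, via your polarization identity) to accuracy $\eta$ costs $\gtrsim L\,\eta^{-1/(2L)}$ weights. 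With your depth budget $L=O(\log\beta)$, even the \emph{single} final multiplication $\tilde\chi\cdot\tilde g$ at accuracy $\varepsilon$ costs $\gtrsim\varepsilon^{-c/\log\beta}$ weights, which exceeds the entire allowed budget $c\,\varepsilon^{-2(d-1)/\beta}$ as soon as $\beta\gg(d-1)\log\beta$, i.e.\ for all large $\beta$; the same problem occurs inside the Taylor-polynomial construction for $g_p$ and for $\tau$ (where the required accuracy is even $\delta\sim\varepsilon^{2}$). This is exactly why the theorem's depth bound contains the term $3\beta/d$: the multiplication subnetworks must be allowed depth $\Theta(\beta/d)$ so that their cost $\sim L\,\varepsilon^{-1/(2L)}\sim(\beta/d)\,\varepsilon^{-d/(2\beta)}$ is subsumed by $\varepsilon^{-d/\beta}$. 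A secondary instance of the same issue: instantiating one multiplication network per grid cell in the smooth-part construction multiplies the cell count $\varepsilon^{-d/\beta}$ by the per-multiplication cost (this is where Yarotsky's extra $\log(1/\varepsilon)$ factor comes from), so to reach the clean bound $c\,\varepsilon^{-d/\beta}$ the multiplications must additionally be shared across cells (e.g.\ by re-expanding the local Taylor polynomials in global monomials and multiplying each monomial by a single piecewise-multilinear coefficient function), which is the kind of bookkeeping the source carries out and your sketch elides. So the skeleton of your argument is the right one, but as written it proves a weaker statement --- either with a depth bound growing in $\beta$ beyond $O(\log\beta)$, or with a weight count that fails to be $O(\varepsilon^{-2(d-1)/\beta})$ for large $\beta$ --- rather than the theorem as stated.
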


In \cite{PV}, this convergence rate has been proved to be optimal up to a logarithmic factor.

\smallskip

We now look at the consequences of Theorem \ref{thm:pv} when using neural networks to approximate $q(\By)=\left\{u_{\text{ell}}(\By;\Bx_i)\right\}_{i=1}^{N_p}$ or $q(\By)=\left\{\left|u_{\text{Helm}}(\By;\Bx_i)\right|\right\}_{i=1}^{N_p}$. 

We first consider one point evaluation, that is $N_p=1$. In this case, we have the following result.
\begin{lemma}
Let $q(\By)=u_{\text{ell}}(\By;\Bx_0)$ or $q(\By)=\left|u_{\text{Helm}}(\By;\Bx_0)\right|$, for a point $\Bx_0$ such that $\mathcal{P}_d^{\Gamma,\Bx_0}$ is not empty, and let the assumptions of Propositions \ref{prop:contyell} and \ref{prop:contyhelm} be fulfilled. Then $q\in \mathcal{E}_{s,\beta,d,B,2}$, for some $s=s(\Bx_0)\in\mathbb{N}_0, B>0$ and for any $\beta=k+\gamma>0$ such that $r_0\in\mathcal{C}^{k,\gamma}_{per}([0,2\pi))$ and, for the elliptic case, $f$ is as in Proposition \ref{prop:contyell} $(i)$. If $p> 2$ in Assumption \ref{ass:radius}, then $q\in \mathcal{E}_{s,\beta,d,B,2}$ for $\beta=k+\gamma$ with $k,\gamma $ as in \eqref{eq:k} and $B$  independent of the dimension $d$.
\end{lemma}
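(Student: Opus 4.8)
The plan is to realize $q$ as a two-piece element of $\mathcal{E}_{s,\beta,d,B,2}$, using the two sides of the hyperplane $\mathcal{P}_d^{\Gamma,\Bx_0}$ as the pieces. By Lemma \ref{lem:hyperplane} the set $\mathcal{P}_d^{\Gamma,\Bx_0}$ is an affine hyperplane; concretely, inserting \eqref{eq:radiusy} into the condition $\lVert\Bx_0\rVert=r(\By;\varphi_0)$ with $\varphi_0:=\arg(\Bx_0)$ gives $\mathcal{P}_d^{\Gamma,\Bx_0}=\{\By:\Ba\cdot\By=c\}$ with normal $\Ba=(b_j\psi_j(\varphi_0))_{j=1}^d\neq0$ and $c=\lVert\Bx_0\rVert-r_0(\varphi_0)$, so that $\mathcal{P}_d^{+,\Bx_0}$ and $\mathcal{P}_d^{-,\Bx_0}$ are the half-spaces $\{\Ba\cdot\By\geq c\}$ and $\{\Ba\cdot\By\leq c\}$ intersected with the cube. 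First I would record the two smooth pieces: by Proposition \ref{prop:contyell} (resp. \ref{prop:contyhelm}) the restriction of $q$ to each of $\overline{\mathcal{P}_d^{+,\Bx_0}}$ and $\overline{\mathcal{P}_d^{-,\Bx_0}}$ lies in $C^{k,\gamma}$, for $\beta=k+\gamma$ as in the hypotheses.

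The next step is to promote these one-sided restrictions to functions $g_p\in\mathcal{F}_{\beta,d,B}$ on the whole cube. Rather than invoke an abstract extension theorem, I would build them by the very mechanism that produces the non-smoothness: on the outer side $q(\By)=\hat{u}_{\ast}(\By;\Phi^{-1}(\By;\Bx_0))$ with spatial argument in $\overline{\hat{D}_o}$, and $\hat{u}_{\ast}(\By;\cdot)$ restricted to $\overline{\hat{D}_o}$ admits a spatial $C^{k,\gamma}$ extension across $\hat{\Gamma}$ (one-sided interface regularity, as in the proof of Proposition \ref{prop:contyell}). Composing this spatial extension with the maps $\By\mapsto\Phi^{-1}(\By;\Bx_0)$ and $\By\mapsto\hat{u}_{\ast}(\By;\cdot)$, which are analytic in $\By$ on all of $\mathcal{P}_d$, yields $g_o\in C^{k,\gamma}(\overline{\mathcal{P}_d})$ agreeing with $q$ on $\overline{\mathcal{P}_d^{+,\Bx_0}}$; the inner side gives $g_i$ analogously. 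When $p>2$, the $d$-independent H\"older bounds of Propositions \ref{prop:contyell} and \ref{prop:contyhelm} (for $k,\gamma$ as in \eqref{eq:k}, together with uniform coercivity) make $\lVert g_o\rVert_{C^{k,\gamma}},\lVert g_i\rVert_{C^{k,\gamma}}$ bounded independently of $d$, which is exactly what the $d$-independent $B$ requires; for the non-uniform statement any bounded $C^{k,\gamma}$-extension operator on the (convex, hence Lipschitz) half-cubes suffices.

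It then remains to check that the indicator sets lie in $\mathcal{K}_{s,\beta,d,B}$. Because the dividing set is an exact hyperplane, each half-space coincides, inside the cube, with a horizon set: after a permutation $T\in\Pi(d,\mathbb{R})$ moving a pivot coordinate $j^{*}$ with $a_{j^{*}}\neq0$ to the first position and dividing by $a_{j^{*}}$, the boundary is the graph of an affine $\tau$, and an affine $\tau$ belongs to $\mathcal{F}_{\beta,d-1,B}$ for every $\beta$; moreover $\lVert\tau\rVert_{C^{k,\gamma}}$ is controlled by $(|c|+\sum_j|b_j|)/|a_{j^{*}}|\leq(|c|+r_0^{-}/2)/|a_{j^{*}}|$ via Assumption \ref{ass:radius}, hence is $d$-independent. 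Since this representation is global (the profile is affine), the local condition defining $\mathcal{K}_{s,\beta,d,B}$ holds for every $s\in\mathbb{N}_0$, so any $s=s(\Bx_0)$ may be taken; likewise $[-1,1]^d$ is a horizon set with constant profile $\tau\equiv1$. One subtlety is that $\Pi(d,\mathbb{R})$ contains only permutations, not reflections, so the horizon convention fixes the orientation and only the half-space whose pivot entry has the correct sign is directly a horizon set. As $\Ba\neq0$, at least one of $\{\Ba\cdot\By\geq c\}$, $\{\Ba\cdot\By\leq c\}$ qualifies; calling it $K_B$ and the corresponding smooth piece $g_B$ (and $g_A$ the piece on the other side), I would finally write
\[
q \;=\; g_A\,\mathbbm{1}_{[-1,1]^d} \;+\; (g_B-g_A)\,\mathbbm{1}_{K_B},
\]
which is a sum of $P=2$ terms of the required form, with $g_A,\,g_B-g_A\in\mathcal{F}_{\beta,d,B}$, $[-1,1]^d,K_B\in\mathcal{K}_{s,\beta,d,B}$, and $B$ the maximum of the constants above.

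The main obstacle is the $d$-uniform control: a black-box H\"older extension would carry a dimension-dependent constant, so the $d$-independent claim for $p>2$ hinges on constructing the smooth pieces intrinsically from the one-sided, spatially extended solution composed with the $\By$-analytic maps, and on the bound $\sum_{j\geq1}|b_j|\leq r_0^{-}/2$, which keeps both the affine horizon profile and the $C^{k,\gamma}$-norms of $g_A,g_B$ bounded uniformly in $d$. The permutation-only orientation issue is a minor but genuine point, dispatched by the cube-plus-single-half-space decomposition above. For the Helmholtz QoI $q=\left|u_{\text{Helm}}\right|$ one takes the one-sided pieces of $u_{\text{Helm}}$ furnished by Proposition \ref{prop:contyhelm} and composes with $t\mapsto|t|$, which preserves the one-sided $C^{k,\gamma}$ regularity away from zeros of the field.
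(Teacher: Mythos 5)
Your proposal is correct and follows the same skeleton as the paper's proof: identify $\mathcal{P}_d^{\Gamma,\Bx_0}$ as a hyperplane via Lemma \ref{lem:hyperplane}, so that its sides are horizon sets with affine profile $\tau$; take the two smooth pieces from Propositions \ref{prop:contyell}$(i)$ and \ref{prop:contyhelm}$(i)$; and assemble a two-term element of $\mathcal{E}_{s,\beta,d,B,2}$, with $d$-independent $B$ for $p>2$. The paper's proof is exactly this and nothing more (it writes $q=q_+\mathbbm{1}_{\mathcal{P}_d^{+,\Bx_0}}+q_-\mathbbm{1}_{\mathcal{P}_d^{-,\Bx_0}}$ and cites the propositions), so the value of your write-up lies in making rigorous two points the paper leaves implicit. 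First, membership in $\mathcal{E}_{s,\beta,d,B,2}$ requires $g_p\in\mathcal{F}_{\beta,d,B}$, i.e. $C^{k,\gamma}$ regularity on all of $\overline{\mathcal{P}_d}$, whereas the propositions give only one-sided regularity on the closed half-cubes; your construction of the global pieces by a one-sided \emph{spatial} extension of $\hat{u}_{\ast}$ across $\hat{\Gamma}$, composed with the $\By$-analytic maps $\By\mapsto\hat{u}_{\ast}(\By;\cdot)$ and $\By\mapsto\Phi^{-1}(\By;\Bx_0)$, is the right mechanism, and since the extension operator acts on a fixed two-dimensional spatial domain it indeed preserves the $d$-independence of $B$, which a generic Whitney extension on the half-cube in $\mathbb{R}^d$ would not obviously do. Second, your observation that $\Pi(d,\mathbb{R})$ contains no reflections, so that in general only one of the two half-spaces is directly a horizon set, is a genuine point (for the evaluation point $\Bx_0=(r_0,0)$ used in the experiments, the normal $(b_j\psi_j(0))_{j=1}^d$ has all nonzero entries of the same sign), and your decomposition $q=g_A\mathbbm{1}_{[-1,1]^d}+(g_B-g_A)\mathbbm{1}_{K_B}$ repairs it cleanly. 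Two loose ends remain on your side: the $d$-independence of $1/|a_{j^*}|$ in your bound on $\lVert\tau\rVert$ requires choosing the pivot as a maximal entry of the normal and bounding it from below via the first two coefficients $b_1,b_2$, which are fixed independently of $d$; and the caveat about zeros of $u_{\text{Helm}}$ when composing with the modulus (relevant for $\beta>1$) is real, though it is equally unaddressed in the paper's own proof, which applies Proposition \ref{prop:contyhelm} to $\left|u_{\text{Helm}}\right|$ without comment.
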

\begin{proof}
According to Lemma \ref{lem:hyperplane}, we have that $\tau$ in \eqref{eq:hyperplane} is a hyperplane, and therefore it belongs to $\mathcal{H}\mathcal{F}_{\beta,d,B}$, for every $\beta>0$. The constant $B>0$ is independent of $d$ if, in Assumption \ref{ass:radius}, $p> 2$, and by taking $\beta=k+\gamma$ with $k,\gamma$ as in \eqref{eq:k}. We have then that, for both model problems, $q(\By)= q_{+}(\By) \mathbbm{1}_{ \mathcal{P}_{d}^{+,\Bx_0}} +  q_{-}(\By) \mathbbm{1}_{ \mathcal{P}_{d}^{-,\Bx_0}}$, where $\mathcal{P}_{d}^{+,\Bx_0}$ and $\mathcal{P}_{d}^{-,\Bx_0}$ are as in Subsection \ref{ssec:paramdep}. 

Thanks to Proposition \ref{prop:contyell} $(i)$ for the elliptic problem and Proposition \ref{prop:contyhelm} $(i)$ for the Helmholtz problem, $q_{+}$ and $q_{-}$ are H\"older continuous in $\mathcal{P}_{d}^{+,\Bx_0}$ and $\mathcal{P}_{d}^{-,\Bx_0}$ respectively, for every $\beta=k+\gamma>0$ such that $r_0\in\mathcal{C}^{k,\gamma}_{per}([0,2\pi))$ and, for the elliptic case, $f$ is as in Proposition \ref{prop:contyell} $(i)$. In particular, if $p> 2$ in Assumption \ref{ass:radius}, $q_{+}$ and $q_{-}$ belong, respectively, to $C^{k,\gamma}(\overline{\mathcal{P}_d^{+,\Bx_0}})$ and $C^{k,\gamma}(\overline{\mathcal{P}_d^{-,\Bx_0}})$ for $k$ and $\gamma$ as in \eqref{eq:k}, with their H\"older norms bounded independently of the dimension $d$. This means that $q\in \mathcal{E}_{r,\beta,d,B,2}$ and that $B$ can be chosen to be independent of $d$ for $k$ and $\gamma$ as in \eqref{eq:k}, under the condition $p> 2$.
\end{proof}

By applying Theorem \ref{thm:pv} to our case, we have the following result.

\begin{corollary}\label{cor:onept}
Let $d\geq 2$ and let $q(\By)=u_{\text{\emph{ell}}}(\By;\Bx_0)$ or $q(\By)=\left|u_{\text{\emph{Helm}}}(\By;\Bx_0)\right|$, with $\Bx_0$ such that $\mathcal{P}_d^{\Gamma,\Bx_0}$ is not empty. Furthermore, let assumptions of Proposition \ref{prop:contyell} and \ref{prop:contyhelm} be fulfilled. For all $\beta=k+\gamma>0$ such that $r_0\in\mathcal{C}^{k,\gamma}_{per}([0,2\pi))$ and, for the elliptic case, $f$ is as in Propositions \ref{prop:contyell} $(i)$, there exist constants $c=c(d,\beta,\Bx_0,B)>0$ and $\sigma=\sigma(d,\beta,\Bx_0,B)\in\mathbb{N}$ such that, for all $\varepsilon\in(0,\frac{1}{2})$, there is a neural network $\mathcal{N}\mathcal{N}_{\varepsilon}^q$ with at most $(4+\lceil\log\beta \rceil)\cdot (12+\frac{3\beta}{d})$ layers, ReLU activation function $\rho_0$, and at most $c\varepsilon^{-2(d-1)/\beta}$ nonzero, $(\sigma,\varepsilon)$-quantized weights, such that
\begin{equation*}
\lVert R_{\rho}(\mathcal{N}\mathcal{N}_{\varepsilon}^q)-q \rVert_{L^2([-1,1]^d)}\leq \varepsilon \quad\text{ and }\quad \lVert R_{\rho_0}(\mathcal{N}\mathcal{N}_{\varepsilon}^q)\rVert_{L^{\infty}([-1,1]^d)}\leq \lceil B\rceil.
\end{equation*}
If $p> 2$ in Assumption \ref{ass:radius}, then the result above holds for $\beta=k+\gamma$ and $k,\gamma$ as in \eqref{eq:k}, with a bound $\lVert R_{\rho_0}(\mathcal{N}\mathcal{N}_{\varepsilon}^q)\rVert_{L^{\infty}([-1,1]^d)}\leq \lceil B\rceil$ independent of the dimension $d$.
\end{corollary}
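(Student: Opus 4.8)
The plan is to reduce the corollary to a direct application of Theorem \ref{thm:pv}, since all the analytic content has already been established. First I would invoke the preceding Lemma: under the hypotheses of Propositions \ref{prop:contyell} and \ref{prop:contyhelm}, the scalar map $q$ (equal to $u_{\text{ell}}(\cdot;\Bx_0)$ or $\left|u_{\text{Helm}}(\cdot;\Bx_0)\right|$) lies in the class $\mathcal{E}_{s,\beta,d,B,2}$ of piecewise smooth functions with exactly $P=2$ pieces, for some $s=s(\Bx_0)\in\mathbb{N}_0$, some $B>0$, and every admissible $\beta=k+\gamma$ with $r_0\in C^{k,\gamma}_{per}([0,2\pi))$ (and $f$ as in Proposition \ref{prop:contyell}$(i)$ in the elliptic case). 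The two pieces are precisely the half-spaces $\mathcal{P}_d^{+,\Bx_0}$ and $\mathcal{P}_d^{-,\Bx_0}$ separated by the hyperplane $\mathcal{P}_d^{\Gamma,\Bx_0}$ of Lemma \ref{lem:hyperplane}; it is this hyperplane structure that supplies the horizon-function condition in the definition of $\mathcal{K}_{s,\beta,d,B}$.

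Next I would apply Theorem \ref{thm:pv} to this $q$ with the number of pieces fixed to $P=2$. The theorem then yields, for every $\varepsilon\in(0,\tfrac12)$, a ReLU network $\mathcal{N}\mathcal{N}_\varepsilon^q$ with the stated number of layers, $(4+\lceil\log\beta\rceil)\cdot(12+\tfrac{3\beta}{d})$, and with at most $c\,\varepsilon^{-2(d-1)/\beta}$ nonzero $(\sigma,\varepsilon)$-quantized weights, satisfying $\lVert R_{\rho_0}(\mathcal{N}\mathcal{N}_\varepsilon^q)-q\rVert_{L^2([-1,1]^d)}\leq\varepsilon$ and $\lVert R_{\rho_0}(\mathcal{N}\mathcal{N}_\varepsilon^q)\rVert_{L^\infty([-1,1]^d)}\leq\lceil B\rceil$. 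The one piece of bookkeeping is to recast the constant dependencies into the form asserted in the corollary: Theorem \ref{thm:pv} gives $c=c(d,\beta,s,B,P)$ and $\sigma=\sigma(d,\beta,s,B)$, but since $P=2$ is now fixed and $s=s(\Bx_0)$ is determined by the evaluation point through the Lemma, I would absorb the $s$- and $P$-dependence into a dependence on $\Bx_0$, obtaining $c=c(d,\beta,\Bx_0,B)$ and $\sigma=\sigma(d,\beta,\Bx_0,B)$.

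Finally, for the dimension-robust statement under $p>2$, I would appeal once more to the Lemma, which guarantees that for the distinguished choice $\beta=k+\gamma$ with $k,\gamma$ as in \eqref{eq:k} the constant $B$ may be taken independent of $d$. As the uniform bound furnished by Theorem \ref{thm:pv} is exactly $\lceil B\rceil$, this $d$-independence of $B$ transfers immediately to a $d$-independent bound $\lVert R_{\rho_0}(\mathcal{N}\mathcal{N}_\varepsilon^q)\rVert_{L^\infty([-1,1]^d)}\leq\lceil B\rceil$. I do not anticipate any genuine obstacle here: the substantive work---piecewise smoothness with two pieces separated by a hyperplane, together with explicit control of the H\"older norm---is entirely carried by the preceding Lemma and by Theorem \ref{thm:pv}, and the only care needed is in tracking how the constants $c$, $\sigma$ and the bound $\lceil B\rceil$ depend on $\Bx_0$ and on $d$.
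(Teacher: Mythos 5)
Your proposal is correct and follows exactly the paper's (implicit) argument: the preceding Lemma places $q$ in $\mathcal{E}_{s,\beta,d,B,2}$ with the two pieces separated by the hyperplane $\mathcal{P}_d^{\Gamma,\Bx_0}$, and Theorem \ref{thm:pv} is then applied verbatim with $P=2$, absorbing the dependence on $s=s(\Bx_0)$ and $P$ into the constants $c$ and $\sigma$. Your handling of the $p>2$ case, transferring the $d$-independence of $B$ from the Lemma to the $L^\infty$ bound $\lceil B\rceil$, is likewise exactly what the paper intends.
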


When we consider more point evaluations, that is when $q(\By)=\left\{u_{\text{ell}}(\By;\Bx_i)\right\}_{i=1}^{N_p}$ or $q(\By)=\left\{\left|u_{\text{Helm}}(\By;\Bx_i)\right|\right\}_{i=1}^{N_p}$ with $N_p>1$, we can first consider one neural network for each evaluation point, and then we construct the neural network to approximate $q$ by putting all these neural network in parallel, see Figure \ref{fig:nnparallel}. This leads to the following.

\begin{corollary}\label{cor:morepts}
Let $d\geq 2$ and let $q(\By)=\left\{u_{\text{\emph{ell}}}(\By;\Bx_i)\right\}_{i=1}^{N_p}$ or $q(\By)=\left\{\left|u_{\text{\emph{Helm}}}(\By;\Bx_i)\right|\right\}_{i=1}^{N_p}$. Then the same claim as in Corollary \ref{cor:onept} holds for the approximation of $q$ but with a number of nonzero weights which is given by $cN_p\varepsilon^{-2(d-1)/\beta}$ and constants $c=c\left(d,\beta,\left\{\Bx_i\right\}_{i=1}^{N_p},B\right)$, $\sigma=\sigma\left(d,\beta,\left\{\Bx_i\right\}_{i=1}^{N_p},B\right)$.
\end{corollary}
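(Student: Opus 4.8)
The plan is to reduce the vector-valued claim to the scalar case already settled in Corollary \ref{cor:onept} and then assemble the component networks into a single network by the \emph{parallelization} indicated in Figure \ref{fig:nnparallel}. Write $q(\By) = (q_1(\By),\ldots,q_{N_p}(\By))$ with $q_i(\By) = u_{\text{ell}}(\By;\Bx_i)$, respectively $q_i(\By) = |u_{\text{Helm}}(\By;\Bx_i)|$. Each evaluation point $\Bx_i$ carries its own hyperplane $\mathcal{P}_d^{\Gamma,\Bx_i}$ and its own pair of smooth pieces $q_{i,+}, q_{i,-}$, so by the Lemma preceding Corollary \ref{cor:onept} every component satisfies $q_i \in \mathcal{E}_{s_i,\beta,d,B,2}$ with a common $\beta = k+\gamma$ (and, when $p>2$, a common $d$-independent $B$), the only $i$-dependence being the integer $s_i = s(\Bx_i)$.

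First I would fix the target accuracy of each component to $\tilde\varepsilon := \varepsilon/\sqrt{N_p}$ and apply Corollary \ref{cor:onept} to each $q_i$ separately. This yields, for every $i$, a ReLU network $\nn_{\tilde\varepsilon}^{q_i}$ with at most $(4+\lceil\log\beta\rceil)\cdot(12+\frac{3\beta}{d})$ layers and at most $c_i\,\tilde\varepsilon^{-2(d-1)/\beta} = c_i\,N_p^{(d-1)/\beta}\varepsilon^{-2(d-1)/\beta}$ nonzero quantized weights, satisfying $\lVert R_{\rho_0}(\nn_{\tilde\varepsilon}^{q_i}) - q_i\rVert_{L^2([-1,1]^d)} \le \tilde\varepsilon$ and $\lVert R_{\rho_0}(\nn_{\tilde\varepsilon}^{q_i})\rVert_{L^\infty([-1,1]^d)} \le \lceil B\rceil$.

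Next I would combine these into one network $\nn_\varepsilon^q$ of output dimension $N_p$ by the standard parallelization of subnetworks that share the same input. Since all subnetworks act on the same $\By$, one stacks the first-layer weight matrices vertically and places the weights of the deeper layers in block-diagonal position, so that $R_{\rho_0}(\nn_\varepsilon^q)(\By) = (R_{\rho_0}(\nn_{\tilde\varepsilon}^{q_1})(\By),\ldots,R_{\rho_0}(\nn_{\tilde\varepsilon}^{q_{N_p}})(\By))$. The point that makes this legal is that the depth bound in Corollary \ref{cor:onept} is $(4+\lceil\log\beta\rceil)\cdot(12+\frac{3\beta}{d})$, which depends only on $d$ and $\beta$ and not on the point $\Bx_i$; hence all subnetworks share a common depth and can be run side by side, and the parallel network keeps that same number of layers. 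Its count of nonzero weights is exactly $\sum_{i=1}^{N_p} M(\nn_{\tilde\varepsilon}^{q_i}) \le N_p\,(\max_i c_i)\,N_p^{(d-1)/\beta}\varepsilon^{-2(d-1)/\beta}$. Absorbing the factor $N_p^{(d-1)/\beta}$ (which depends on $N_p$, hence on the point set) into the constant $c = c(d,\beta,\{\Bx_i\}_{i=1}^{N_p},B)$ leaves the advertised bound $c\,N_p\,\varepsilon^{-2(d-1)/\beta}$, the explicit linear factor $N_p$ being precisely the number of parallel subnetworks.

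It remains to check the error bound. Measuring the vector-valued $L^2$-error through $\lVert F\rVert_{L^2([-1,1]^d)}^2 = \int_{[-1,1]^d}\lVert F(\By)\rVert_{\mathbb{R}^{N_p}}^2\,\dd\By$, the approximation error splits by orthogonality into $\sum_{i=1}^{N_p}\lVert R_{\rho_0}(\nn_{\tilde\varepsilon}^{q_i}) - q_i\rVert_{L^2([-1,1]^d)}^2 \le N_p\,\tilde\varepsilon^2 = \varepsilon^2$, so that $\lVert R_{\rho_0}(\nn_\varepsilon^q) - q\rVert_{L^2([-1,1]^d)} \le \varepsilon$, while the bound $\lceil B\rceil$ passes componentwise to the output. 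The expected main obstacle is not the analysis but the bookkeeping of the parallelization in the matrix-tuple formalism: one must confirm that any subnetwork that the construction happens to deliver shallower can be padded to the common depth by ReLU layers realizing the identity, at a cost that is lower order in $\varepsilon^{-1}$ and absorbed into $c$, and that the concatenation preserves quantization (no arithmetic is performed on the weights, so this holds after possibly enlarging $\sigma$ to account for the rescaling $\tilde\varepsilon = \varepsilon/\sqrt{N_p}$, which is permissible since $\sigma$ is allowed to depend on $\{\Bx_i\}_{i=1}^{N_p}$). With these points settled, the statement follows verbatim from Corollary \ref{cor:onept}.
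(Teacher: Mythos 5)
Your proof is correct and takes essentially the same route as the paper: one network per evaluation point, assembled in parallel as in Figure \ref{fig:nnparallel}, with the explicit factor $N_p$ in the weight count coming from the number of parallel subnetworks. The extra bookkeeping you supply --- rescaling the per-component accuracy to $\varepsilon/\sqrt{N_p}$, absorbing the resulting $N_p^{(d-1)/\beta}$ factor into $c$ through its permitted dependence on $\left\{\Bx_i\right\}_{i=1}^{N_p}$, and enlarging $\sigma$ to preserve quantization --- is precisely the detail the paper leaves implicit, and you handle it correctly.
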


\begin{remark}
We have stated the results on neural network approximation properties in the $L^2$-norm. In the numerical experiments, however, we use the relative $L^2$-error \eqref{eq:l2relative}. We note that, provided the denominator in the relative error is bounded from below and above over $\By\in\mathcal{P}_d$, as it turns out to be in our experiments, the two error measures are equivalent and therefore we can expect the estimates for the $L^2$-norm to hold also for the relative $L^2$-error.
\end{remark}

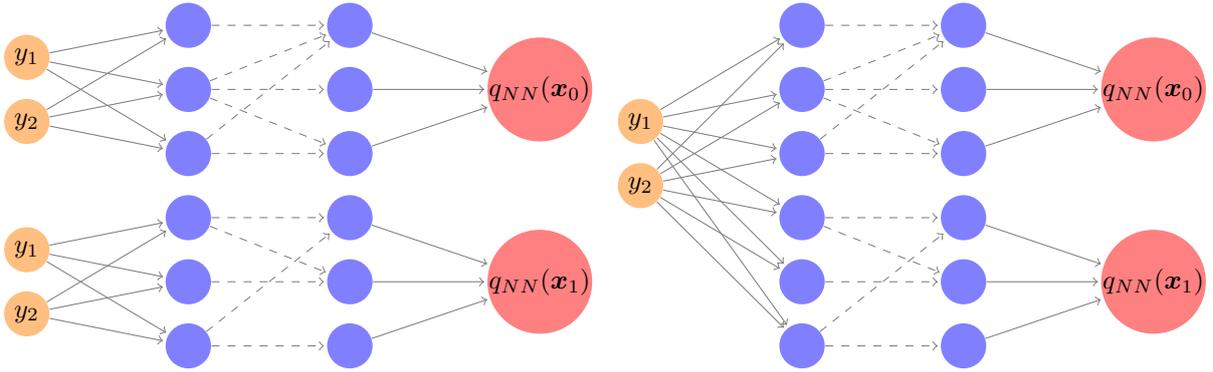
\begin{figure}[t]
\centering
\def\layersep{2.5cm}
\begin{tikzpicture}[shorten >=1pt,->,draw=black!50, node distance=\layersep,scale=0.85]
    \tikzstyle{every pin edge}=[<-,shorten <=1pt]
    \tikzstyle{neuron}=[circle,fill=black!25,minimum size=17pt,inner sep=0pt]
    \tikzstyle{input neuron}=[neuron, fill=orange!50];
    \tikzstyle{output neuron}=[neuron, fill=red!50];
    \tikzstyle{hidden neuron}=[neuron, fill=blue!50];
    \tikzstyle{annot} = [text width=4em, text centered]

    \foreach \name / \y in {1,...,2}
        \node[input neuron] (I-\name) at (0,-\y) {$y_{\y}$};

    \foreach \name / \y in {1,...,3}
        \path[yshift=0.5cm]
            node[hidden neuron] (H-\name) at (\layersep,-\y cm) {};
            
    \foreach \source in {1,...,2}
        \foreach \dest in {1,...,3}
            \path (I-\source) edge (H-\dest);
            
     \path[yshift=0.5cm]
     node[hidden neuron] (H-4) at (\layersep+\layersep,-1 cm) {};
     \path[yshift=0.5cm]
     node[hidden neuron] (H-5) at (\layersep+\layersep,-2 cm) {};
     \path[yshift=0.5cm]
     node[hidden neuron] (H-6) at (\layersep+\layersep,-3 cm) {};
            
    \path[dashed] (H-1) edge (H-4);
    \path[dashed] (H-2) edge (H-4);
    \path[dashed] (H-2) edge (H-5);
    \path[dashed] (H-2) edge (H-6);
    \path[dashed] (H-3) edge (H-4);
    \path[dashed] (H-3) edge (H-6);

    \node[output neuron, right of=H-5] (O) {$q_{NN}(\Bx_0)$};

    \foreach \source in {4,...,6}
        \path (H-\source) edge (O);

    \foreach \name / \y in {1,...,2}
        \node[input neuron] (I-\name) at (0,-\y-3) {$y_{\y}$};

    \foreach \name / \y in {1,...,3}
        \path[yshift=0.5cm]
            node[hidden neuron] (H-\name) at (\layersep,-\y-3) {};
            
    \foreach \source in {1,...,2}
        \foreach \dest in {1,...,3}
            \path (I-\source) edge (H-\dest);
            
     \path[yshift=0.5cm]
     node[hidden neuron] (H-4) at (\layersep+\layersep,-4) {};
     \path[yshift=0.5cm]
     node[hidden neuron] (H-5) at (\layersep+\layersep,-5) {};
     \path[yshift=0.5cm]
     node[hidden neuron] (H-6) at (\layersep+\layersep,-6) {};
            
    \path[dashed] (H-1) edge (H-4);
    \path[dashed] (H-1) edge (H-5);
    \path[dashed] (H-2) edge (H-5);
    \path[dashed] (H-3) edge (H-4);
    \path[dashed] (H-3) edge (H-6);

    \node[output neuron, right of=H-5] (O) {$q_{NN}(\Bx_1)$};

    \foreach \source in {4,...,6}
        \path (H-\source) edge (O);
        
    \foreach \name / \y in {1,...,2}
        \node[input neuron] (I-\name) at (9.5,-\y-1) {$y_{\y}$};

    \foreach \name / \y in {1,...,3}
        \path[yshift=0.5cm]
            node[hidden neuron] (H-\name) at (12,-\y cm) {};
            
    \foreach \source in {1,...,2}
        \foreach \dest in {1,...,3}
            \path (I-\source) edge (H-\dest);
            
     \path[yshift=0.5cm]
     node[hidden neuron] (H-4) at (14.5,-1 cm) {};
     \path[yshift=0.5cm]
     node[hidden neuron] (H-5) at (14.5,-2 cm) {};
     \path[yshift=0.5cm]
     node[hidden neuron] (H-6) at (14.5,-3 cm) {};
            
    \path[dashed] (H-1) edge (H-4);
    \path[dashed] (H-2) edge (H-4);
    \path[dashed] (H-2) edge (H-5);
    \path[dashed] (H-2) edge (H-6);
    \path[dashed] (H-3) edge (H-4);
    \path[dashed] (H-3) edge (H-6);

    \node[output neuron, right of=H-5] (O) {$q_{NN}(\Bx_0)$};

    \foreach \source in {4,...,6}
        \path (H-\source) edge (O);

    \foreach \name / \y in {1,...,3}
        \path[yshift=0.5cm]
            node[hidden neuron] (H-\name) at (12,-\y-3) {};
            
    \foreach \source in {1,...,2}
        \foreach \dest in {1,...,3}
            \path (I-\source) edge (H-\dest);
            
     \path[yshift=0.5cm]
     node[hidden neuron] (H-4) at (14.5,-4) {};
     \path[yshift=0.5cm]
     node[hidden neuron] (H-5) at (14.5,-5) {};
     \path[yshift=0.5cm]
     node[hidden neuron] (H-6) at (14.5,-6) {};
            
    \path[dashed] (H-1) edge (H-4);
    \path[dashed] (H-1) edge (H-5);
    \path[dashed] (H-2) edge (H-5);
    \path[dashed] (H-3) edge (H-4);
    \path[dashed] (H-3) edge (H-6);

    \node[output neuron, right of=H-5] (O) {$q_{NN}(\Bx_1)$};

    \foreach \source in {4,...,6}
        \path (H-\source) edge (O);
\end{tikzpicture}
\caption{Example of parallelization of two neural newtorks, for $d=2$ and $2$ point evaluations: we consider one neural network for point evaluation (left) and, for more point evaluations, we put the neural networks in parallel (right). Here $q_{NN}(\Bx_i)$, $i=0,1$, is a short notation for the neural network realization $\mathcal{R}_{\rho}\left(\nn_{\varepsilon}^{q}\right)$ when $q$ consists of two point evaluations.} 
\label{fig:nnparallel}
\end{figure}

\section{Numerical experiments}\label{sec:numexp}

In this section we examine numerically the approximation properties of some neural networks for $q(\By)=\left\{u_{\text{ell}}(\By;\Bx_i)\right\}_{i=1}^{N_p}$ or $q(\By)=\left\{\left|u_{\text{Helm}}(\By;\Bx_i)\right|\right\}_{i=1}^{N_p}$. To this aim, we fix the neural network architecture for all test cases, and we investigate the dependence of the approximation error on the following aspects.
\begin{itemize}
\item \textsl{Dimensionality}: we analyse how the error behaves in dependence of the dimension $d$ of the parameter space and on the decay $p$ of the scaling constants in the radius expansion (see Assumption \ref{ass:radius}). We will consider a circular nominal radius $r_0$ and coefficients in \eqref{eq:radiusy} given by $b_{2j-1}=b_{2j}=0.08 r_0 j^{-p}$, $j=1,\ldots,\frac{d}{2}$, and we vary $d=8,16,32,64$ and $p=1,2,3$. The maximal shape variations for the different choices of $d$ and $p$, computed as $\frac{\sqrt{2}}{r_0}\sum_{j=1}^{d/2} |b_{2j}|$, are shown in Table \ref{tab:maxshapevar}. 
It has been noted in frameworks slightly different from ours \cite{KPSb,KPSa,PV} that the approximation error of neural networks depends on an \textsl{intrinsic dimensionality} rather than from the nominal dimensionality of the problem. Motivate by this, we expect that the approximation error will possibly increase with increasing $d$, but with different rates according to the value of $p$. 
\item \textsl{Model problem} and \textsl{modeling parameters}: we expect, as also noted in other frameworks \cite{KPSb}, a different performance of the neural network depending on the model, in particular, on the following:
\begin{itemize}
\item \textsl{model problem}, whether \eqref{eq:ep} or \eqref{eq:hp}, and, for \eqref{eq:hp}, the wavenumber: the solution is oscillatory in the Helmholtz transmission problem, and the larger the wavenumber, the more oscillations it has. The more the solution oscillates in the physical space, the more oscillatory is the dependence of the point evaluation with respect to $\By\in\mathcal{P}_d$. For these reasons, for a fixed neural network architecture, we expect a larger approximation error in the Helmholtz problem than in the elliptic problem, and for the first one we expect the error to increase with increasing wavenumber;
\item \textsl{contrast}: the larger is $\alpha_i$ in \eqref{eq:alphaell} and \eqref{eq:alphahelm}, the sharper is the jump in the first derivative across $\mathcal{P}_{d}^{\Gamma,\Bx_0}$ of the point evaluation for some point $\Bx_0$; for this reason, we study whether and how the approximation properties of the neural network depend on the height of this jump;
\item \textsl{type of discontinuity}: in the Helmholtz transmission problem, if we take $\alpha_i=1$ and $\kappa_o^2\neq \kappa_i^2$, then the point evaluation is $C^1$-continuous with respect to the parameter $\By$, see Proposition \ref{prop:contyhelm} and, the larger is the ratio $\frac{\kappa_i^2}{\kappa_o^2}$, the larger is its jump in the second derivative across the surfaces of discontinuity as \eqref{eq:hyperplane}; this is why in the Helmholtz transmission problem we also study the performance of the neural network depending on the ratio $\frac{\kappa_i^2}{\kappa_o^2}$ (making sure that \eqref{eq:nontrapping} is fulfilled to have a nontrapping problem).
\end{itemize}
\item \textsl{Number and location of surfaces of non-smoothness}: the larger is the number of evaluation points $N_p$, the larger is the number of surfaces of non-smoothness \eqref{eq:hyperplane} that need to approximated; therefore, we consider $N_p=1,4,8,16,64$ points evaluations, corresponding to the evaluations points $\Bx_i=r_0(\cos(2\pi\frac{i}{N_p}),\sin(2\pi\frac{i}{N_p}))$, $i=0,\ldots,N_p-1$, and we compare the results by what predicted by Corollary \ref{cor:morepts}. We note that the points chosen are on the circle with radius the nominal radius $r_0$, and therefore they are crossed by the stochastic interface for many realizations of the parameter $\By\in\mathcal{P}_d$. Moreover, for the elliptic case and one point evaluation, we investigate whether the location of the evaluation point affects the performance of the neural network.
\end{itemize}

\begin{table}
\centering
\begin{tabular}{||c|| c |c| c| c||} 
 \hline
\backslashbox{$\boldsymbol{p}$}{$\boldsymbol{d}$}  &\textbf{8} & \textbf{16} & \textbf{32} & \textbf{64} \\ [0.5ex] 
 \hline\hline
$\mathbf{1}$ & $ 23.55 \%$ & $ 30.75 \%$ & $ 38.25 \%$ & $ 45.92 \%$ \\
 \hline
$\mathbf{2}$& $ 16.11 \%$ & $ 17.28 \%$ & $ 17.93 \%$ &  $ 18.26 \%$\\
  \hline
$\mathbf{3}$ & $13.32 \%$  & $ 13.52 \%$  & $  13.58 \%$  & $13.60 \%$\\ 
  \hline
 \end{tabular}\caption{Maximal shape variations with respect to $r_0$ for different decays $p$ of the coefficients in \eqref{eq:radiusy} and different dimensions $d$ of the parameter space. The maximum shape variations are computed as $\frac{\sqrt{2}}{r_0}\sum_{j=1}^{d/2} |b_{2j}|$.}\label{tab:maxshapevar}
\end{table}

Before looking at the results for the two model problems, we describe the neural network parameters used in our experiments. We have implemented our neural network in PyTorch \cite{pytorch}. The network architecture is fixed for all experiments, apart from the dimension of the input and output layers which depend on the case considered. It has depth $L=10$, the input layer has width $d$, the dimension of the parameter $\By\in\mathcal{P}_d$, the output layer has width the number of evaluation points $N_p$ and all hidden layers have width equal to $10$. As activation function we use the $0.2$-leaky ReLU. For optimizing the weights we use the Adam optimizer \cite{KL} with the default pyTorch parameters, learning rate $2\cdot 10^{-4}$ and weights initialized according to a uniform distribution $\sim\mathcal{U}(-a,a)$, $a=\frac{1}{\sqrt{10}}$ for all weights but those associated to the last layer, for which $a=\frac{1}{\sqrt{N_p}}$. We stopped the training after $50000$ epochs. Since we will not use a very large number of training samples, we run the optimization in full batch mode. In order to avoid the occurrence of local minima in the optimization procedure, for each case we have run the optimization algorithm $20$ times and from those we have selected the run delivering the lowest test error (see below for details on its computation). The loss function is given by the square of the $L^2$-relative error over the training samples, \eqref{eq:loss}-\eqref{eq:l2relative}. As error measure we consider therefore the square root of the loss. We measure the error of the network by computing the square root of the loss function using a number of test samples.  
We use $8192$ training samples and $2048$ test samples, for all cases. The number of training samples has been chosen such that difference between the error (square root of the loss) computed with the training and test samples was a fraction of the test error, so that our results are not affected (too much) by the sampling error. For the elliptic case and one point evaluation, where the sampling errors were the largest, using $8192$ samples gave an average difference in the error computed using the training and test samples of $6.99\%$ of the test error, with a variance of $0.17\%$.

The solution of \eqref{eq:ep} and \eqref{eq:hp} for every parameter realization has been implemented in Python using FEniCS \cite{fenics}, version 2019.1.0 of the Dolfin library.

\subsection{Elliptic interface problem}

We first provide the computation details of the solver used to compute the training and test samples of the quantity of interest. The mesh for the domain $D=(-1,1)^2$ consists of 53250 vertices, with a mesh size of 0.002 around the nominal interface, in order to resolve the shape variations, and of 0.03-0.04 in the other regions of the domain. The nominal radius is $r_0=0.5$. For the mapping \eqref{eq:phicircle}, we choose $\tilde{R}=0.875$, and the mollifier is given by 
\begin{equation}\label{eq:mollifier}
\chi(\lVert\hat{\Bx}\rVert)=\begin{cases}
\frac{\lVert\hat{\Bx}\rVert-\frac{r_0}{4}}{\frac{3r_0}{4}} & \text{if }\frac{r_0}{4}\leq\lVert\hat{\Bx}\rVert<r_0,\\
\frac{\tilde{R}-\lVert\hat{\Bx}\rVert}{\tilde{R}-r_0} & \text{if }r_0\leq\lVert\hat{\Bx}\rVert<\tilde{R},\\
0 & \text{otherwise}.
\end{cases}
\end{equation}
This mollifier is not smooth across the circles of radius $\frac{r_0}{4}$ and of radius $\tilde{R}$. However, the theory of the prevous sections still applies if we resolve these interfaces (as well as the nominal interface with radius $r_0$) with the mesh, as it is done in our computations. As right hand side in \eqref{eq:ep} we have taken $f(\Bx)=20+10\sin(x_1)-5\exp(x_1x_2)$, $\Bx\in D$. The variational problem \eqref{eq:ep} has been solved using first order finite elements, and the resulting algebraic system of equations has been solved with the conjugate gradient method using an algebraic multigrid preconditioner.

Tables \ref{tab:elliptic_onept} and \ref{tab:elliptic_64pt} show the square root of the loss \eqref{eq:loss}-\eqref{eq:l2relative} computed on the test samples, for different values of the contrast $\alpha_i$, different dimensions $d$ of the parameter space and different decays $p$ of the coefficients in \eqref{eq:radiusy}, when considering one and $64$ point evaluations, respectively. For all cases, we can see that the neural network achieves to provide a good surrogate for the point evaluation, with a test error in the square root of the loss ranging from $0.12$\textperthousand$\,$ to $0.90 $\textperthousand$\,$ for one point evaluation and from $0.40 \%$ to  $0.95 \%$ for $64$ points evaluations. 

\begin{table}
\begin{center}
 \begin{tabular}{||c|| c |c| c| c||} 
 \multicolumn{5}{c}{$\alpha_i=10$}\\
 \hline
\backslashbox{$\boldsymbol{p}$}{$\boldsymbol{d}$}  &\textbf{8} & \textbf{16} & \textbf{32} & \textbf{64} \\ [0.5ex] 
 \hline\hline
$\mathbf{1}$ & $0.48 $\textperthousand & $0.48 $\textperthousand & $0.62 $\textperthousand & $0.65 $\textperthousand\\
 \hline
$\mathbf{2}$& $0.18 $\textperthousand & $0.20 $\textperthousand & $0.21 $\textperthousand &  $0.27 $\textperthousand\\
  \hline
$\mathbf{3}$ & $0.13 $\textperthousand & $0.12 $\textperthousand & $0.22$\textperthousand & $0.20 $\textperthousand\\ 
  \hline
 \end{tabular}
  \begin{tabular}{||c|| c |c| c| c||} 
 \multicolumn{5}{c}{$\alpha_i=100$}\\
 \hline
\backslashbox{$\boldsymbol{p}$}{$\boldsymbol{d}$}  &\textbf{8} & \textbf{16} & \textbf{32} & \textbf{64} \\ [0.5ex] 
 \hline\hline
$\mathbf{1}$ & $0.59 $\textperthousand & $0.67 $\textperthousand & $0.75 $\textperthousand &  $0.90 $\textperthousand\\
 \hline
$\mathbf{2}$& $0.25 $\textperthousand & $0.22 $\textperthousand & $0.23 $\textperthousand & $0.24 $\textperthousand\\
  \hline
$\mathbf{3}$ & $0.15 $\textperthousand & $0.15 $\textperthousand &  $0.15 $\textperthousand & $0.23 $\textperthousand\\ 
  \hline
 \end{tabular}
 
 \vspace{0.3cm}
   \begin{tabular}{||c|| c |c| c| c||} 
 \multicolumn{5}{c}{$\alpha_i=1000$}\\
 \hline
\backslashbox{$\boldsymbol{p}$}{$\boldsymbol{d}$}  &\textbf{8} & \textbf{16} & \textbf{32} & \textbf{64} \\ [0.5ex] 
 \hline\hline
$\mathbf{1}$ & $0.63$\textperthousand &  $0.71$\textperthousand & $0.77$\textperthousand & $0.82$ \textperthousand\\
 \hline
$\mathbf{2}$& $0.22$\textperthousand &  $0.23$\textperthousand &  $0.28$\textperthousand & $0.27$ \textperthousand\\
  \hline
$\mathbf{3}$ & $0.13$\textperthousand &  $0.14$\textperthousand &  $0.16$\textperthousand & $0.17$\textperthousand\\ 
  \hline
 \end{tabular}

 \end{center}\caption{Elliptic interface problem: square root of the loss \eqref{eq:loss}-\eqref{eq:l2relative} for the point evaluation at $\Bx_0=(r_0,0)$, for different values of the contrast $\alpha_i$, different decays $p$ of the coefficients in \eqref{eq:radiusy} and different dimensions $d$ of the parameter space.}\label{tab:elliptic_onept}
\end{table}
\begin{table}
\begin{center}
\begin{tabular}{||c|| c |c| c| c||} 
  \multicolumn{5}{c}{$\alpha_i=10$}\\
 \hline
\backslashbox{$\boldsymbol{p}$}{$\boldsymbol{d}$}  &\textbf{8} & \textbf{16} & \textbf{32} & \textbf{64} \\ [0.5ex] 
 \hline\hline
$\mathbf{1}$ & $0.83 \%$ & $0.79 \%$ & $0.88 \%$ & $0.79 \%$ \\
 \hline
$\mathbf{2}$& $0.47 \%$ & $0.46 \%$  & $0.48 \%$ & $0.47 \%$ \\
  \hline
$\mathbf{3}$ & $0.42 \%$ & $0.51 \%$ & $0.50 \%$ & $0.41 \%$\\
  \hline
 \end{tabular}\hfill
  \begin{tabular}{||c|| c |c| c| c||} 
   \multicolumn{5}{c}{$\alpha_i=100$}\\
 \hline
\backslashbox{$\boldsymbol{p}$}{$\boldsymbol{d}$}  &\textbf{8} & \textbf{16} & \textbf{32} & \textbf{64} \\ [0.5ex] 
 \hline\hline
$\mathbf{1}$ & $0.91 \%$ & $0.93 \%$ & $0.93 \%$ & $0.94 \%$\\
 \hline
$\mathbf{2}$& $0.62 \%$ & $0.54 \%$ & $0.53 \%$ & $0.58 \%$\\
  \hline
$\mathbf{3}$ & $0.46 \%$ & $0.47 \%$ & $0.54 \%$ & $0.45 \%$ \\
  \hline
 \end{tabular}
 
 \vspace{0.3cm}
   \begin{tabular}{||c|| c |c| c| c||} 
\multicolumn{5}{c}{$\alpha_i=1000$}\\
 \hline
\backslashbox{$\boldsymbol{p}$}{$\boldsymbol{d}$}  &\textbf{8} & \textbf{16} & \textbf{32} & \textbf{64} \\ [0.5ex] 
 \hline\hline
$\mathbf{1}$ & $0.88 \%$ & $0.89 \%$  & $0.95 \%$ & $0.95 \%$\\
 \hline
$\mathbf{2}$& $0.54 \%$ & $0.55 \%$ & $0.55 \%$ & $0.56 \%$\\
  \hline
$\mathbf{3}$ & $0.56 \%$ & $0.40 \%$ & $0.42 \%$ & $0.50 \%$ \\
  \hline
 \end{tabular}
 \end{center}\caption{Elliptic interface problem: square root of the loss \eqref{eq:loss}-\eqref{eq:l2relative} for $64$ point evaluations at equispaced points on the circumference of radius $r_0$, for different values of the contrast $\alpha_i$, different decays $p$ of the coefficients in \eqref{eq:radiusy} and different dimensions $d$ of the parameter space.}\label{tab:elliptic_64pt}
\end{table}

\begin{table}
\begin{center}
 \begin{tabular}{|c| c |c| c |c|} 
  \multicolumn{5}{c}{$p=1,\, d=32,\, \alpha_i=100$}\\
 \hline
 $x_1=0.40$ & $x_1=0.45$ &  $x_1=0.5$  & $x_1=0.55$ & $x_1=0.60$ \\ [0.5ex] 
 \hline\hline
$0.64 $\textperthousand &  $0.67 $\textperthousand & $0.75 $\textperthousand & $0.82 $\textperthousand  & $0.75 $\textperthousand\\
  \hline
 \end{tabular}\hfill
 \end{center}\caption{Elliptic interface problem: square root of the loss \eqref{eq:loss}-\eqref{eq:l2relative} for one point evaluation and different evaluation points $\Bx_0=(x_1,0)$. The decay $p$ of the coefficients has been set to $p=1$, the dimension $d$ of the parameter space to $d=32$ and the contrast to $\alpha_i=100$.}\label{tab:elliptic_disclocation}
\end{table}

\begin{figure}
\centering
\includegraphics[scale=0.35]{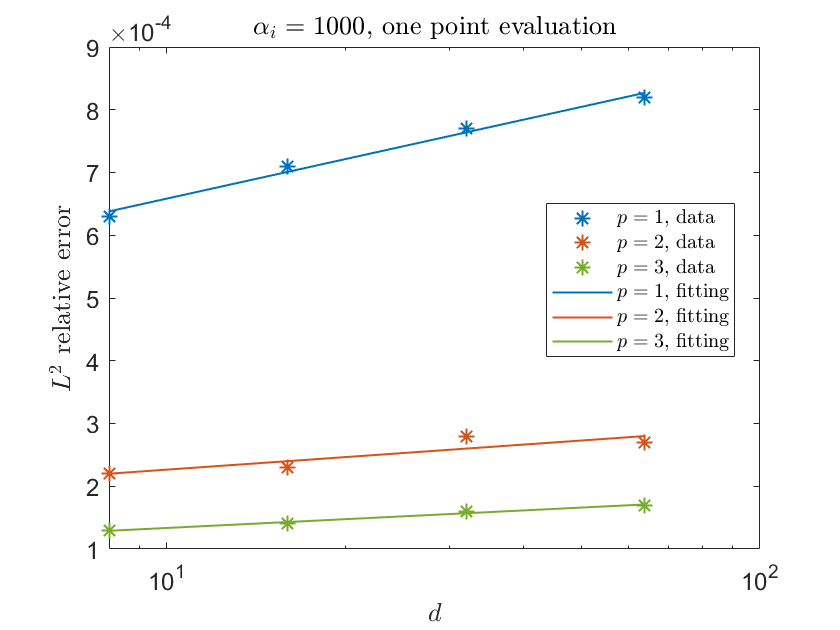}
\includegraphics[scale=0.35]{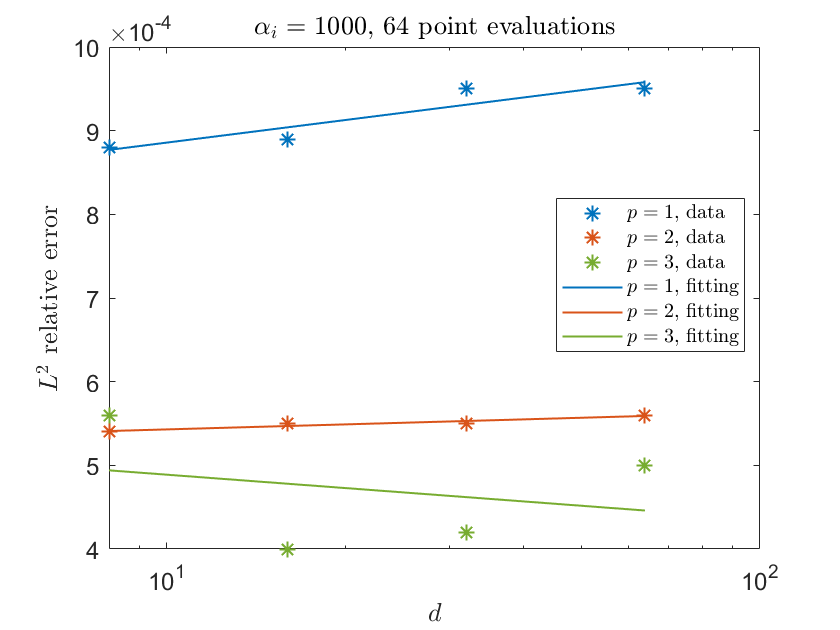}
\caption{Elliptic interface problem: square root of the loss \eqref{eq:loss}-\eqref{eq:l2relative} in dependence of the dimension $d$ of the parameter space, for different decays $p$ of the coefficients in \eqref{eq:radiusy}, one (left) and $64$(right) point evaluations. The contrast has been fixed to $\alpha_i=1000$. The markers denote the data points, while the lines are the least square fitting.}\label{fig:elliptic_d}
\end{figure}

\begin{figure}
\centering
\includegraphics[scale=0.35]{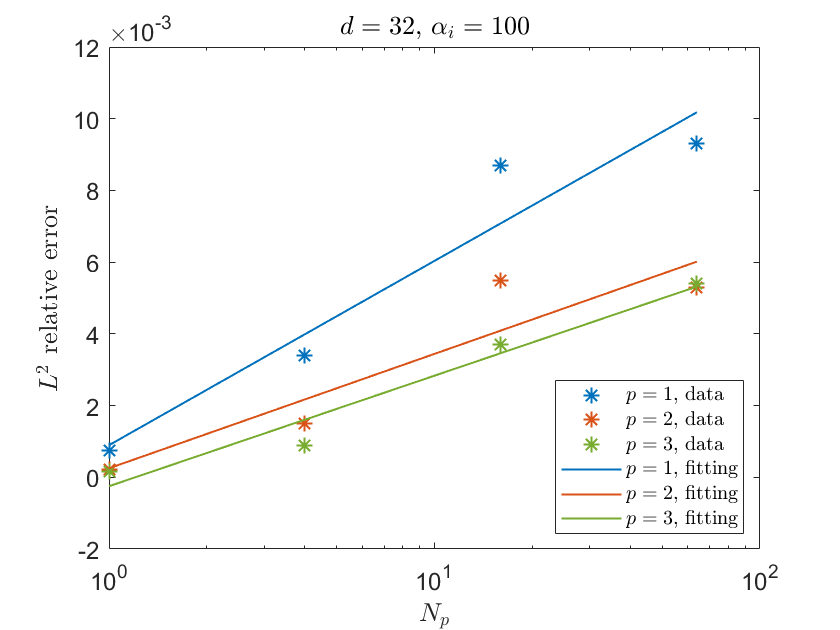}
\includegraphics[scale=0.35]{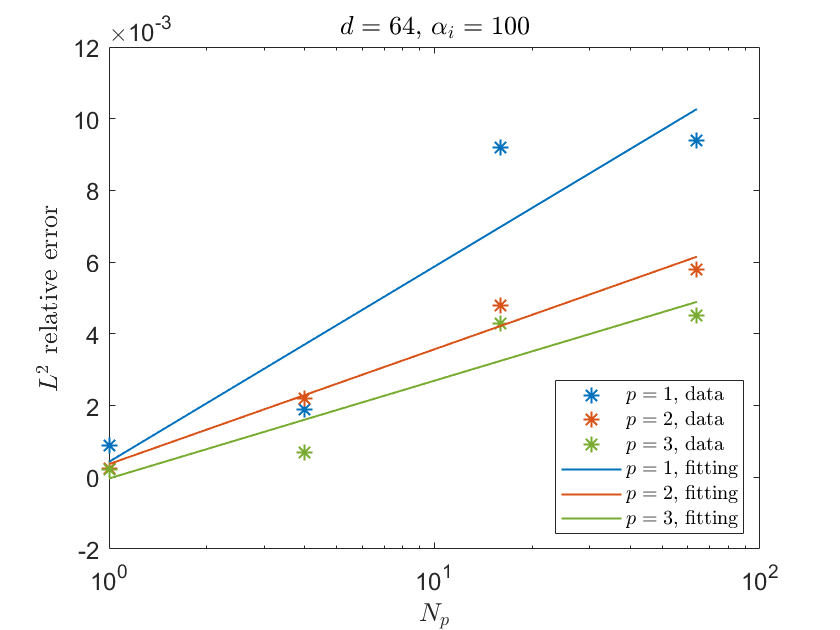}\caption{Elliptic interface problem: square root of the loss \eqref{eq:loss}-\eqref{eq:l2relative} in dependence of the number of point evaluations at equispaced points on the circumference of radius $r_0$, for different decays $p$ of the coefficients in \eqref{eq:radiusy}, dimensions $d=32$ (left) and $d=64$ (right) of the parameter space. The contrast has been fixed to $\alpha_i=100$. The markers denote the data points, while the lines are the least square fitting.}\label{fig:elliptic_Np}
\end{figure}

To better observe the increase of the error with the dimension of the parameter space, in Figure \ref{fig:elliptic_d} we plot the error versus the dimension $d$ of the parameter space, for different values of $p$, and for $1$ and $64$ point evaluations. The contrast there has been fixed to $\alpha_i=1000$. We remind that the value of $p$ is connected with the intrinsic dimensionality of the problem, in the sense that larger values of $p$ make the higher dimensions of the parameter space less important compared to the first dimensions. Indeed, from Figure \ref{fig:elliptic_d} we see that the dependence of the error with respect to the dimension is very mild and indeed a significative dependence of the type $O(\log d)$ can be observed for $p=1$ only. In particular, for $p=2,3$ the error is $O(1)$ with respect to the dimension, and for $64$ points evaluations we can see variations in the error which are probably due mainly to the optimization error and which are causing a fitting with a line of slightly negative slope in the left plot of Figure \ref{fig:elliptic_d}.

Tables \ref{tab:elliptic_onept} and \ref{tab:elliptic_64pt} also tell us how the test error depends on the contrast. In particular, we see that the error depends very mildly on the contrast and a clear dependence can be observed essentially only when comparing the results for $\alpha_i=10$ with those for $\alpha_i=100,1000$ for $p=1$, for both one and $64$ point evaluations. 

We now look at the dependence of the error on the surfaces of non-smoothness. In Table \ref{tab:elliptic_disclocation} we have considered one point evaluation, decay $p=1$, dimension $d=32$ of the parameter space and contrast $\alpha_i=100$, and we have computed the error for different locations of the evaluation point $\Bx_0=(x_1,0)$ on the horizontal line through the origin, that is varying $x_1$. When $x_1<r_0=0.5$, the point will be inside the scatterer for most realizations. When $x_1>r_0$, it will be outside the scatterer for the majority of the interface realizations. Table \ref{tab:elliptic_disclocation} shows that the test error tends to be larger for the case $x_1\geq r_0$ than when $x_1<r_0$. We notice that, since $\alpha_i>1$ and the diffusion coefficient is $1$ outside the scatterer, the solution in the physical space has larger gradients outside the scatterer than when it is inside. This behavior in the physical space affects the dependence of the point evaluation on the parameter, which will have therefore higher gradients when the point is outside the scatterer for most realizations. These higher gradients could be the reason why we observe larger errors for $x_1> r_0$. To observe the dependence of the error on the number of surfaces of non-smoothness as in \eqref{eq:hyperplane}, in Figure \ref{fig:elliptic_Np}, for different values of $p$, $\alpha_i=100$, $d=32$ (left plot) and $d=64$ (right plot), we show the test error in dependence of the number of point evaluations, therefore in dependence on the number of hyperplanes of discontinuity. We note that, since the dimension of the output depends on the number of discontinuities, the neural networks for different number of point evaluations have a slightly different number of nodes and weights. However, this difference is negligible compared to the size of the networks, so that the results for different point evaluations can be still considered to be comparable to each other. From Figure \ref{fig:elliptic_Np} we see that, as expected, the error increases with an increasing number of discontinuities. The dependence of the error seems to be $O(\log N_p)$, with a constant depending on $p$. Such increase of the error is milder than the linear dependence predicted by the theory, see Corollary \ref{cor:morepts}.

\subsection{Helmholtz transmission problem}

The computation details of the solver used to compute the training and test samples of the quantity of interest are the following. The domain $D$ is a circle of radius $R=0.055$ and the nominal radius is $r_0=0.01$. With the exception of the experiments where we vary the frequency, we have always used $\kappa_o=\frac{2 00\pi}{3}$ as wavenumber for the incoming wave. In order to approximate the Sommerfeld radiation condition \eqref{eq:somm}, we have used a circular Perfectly Matched Layer (PML, see for instance \cite{CM}) of thickness $0.02$ and damping coefficient $0.5$. Apart from the experiments where we vary the frequency, we have always used a mesh with 124014 elements, with a mesh size of about 0.00016 around the nominal interface with radius $r_0$ and between 0.0006 and 0.00085 in the PML layer. In the domain mapping, we have used the same mollifier as in \eqref{eq:mollifier}, with $\tilde{R}=0.055$ and by resolving the interface with radius $\frac{r_0}{4}$ with the mesh. The variational problem \eqref{eq:varformhp} has been solved using first order finite elements for the real and imaginary part of the solution, and the resulting algebraic system of equations has been solved using a direct solver. 

Tables \ref{tab:helmholtz_onept} shows the test error for one point evaluation at $\Bx_0=(r_0,0)$ and different values of the contrast $\alpha_i$, different dimensions $d$ of the parameter space and different decays $p$ of the coefficients in \eqref{eq:radiusy}. There we have fixed $\kappa_i/\kappa_0=0.8$. Table \ref{tab:helmholtz_64pt} shows the same results for the simulaneous evaluation of the solution in $64$ equispaced points on the circumference with radius $r_0$. From the two tables we can see that in all test cases the neural network provides a good surrogate for the point evaluation. Comparing with Tables \ref{tab:elliptic_onept} and \ref{tab:elliptic_64pt}, we also see that the errors are larger for the Helmholtz transmission problem. Figure \ref{fig:helmholtz_d} shows the error in dependence of the dimension $d$ for the case $\alpha_i=1000$ in Table \ref{tab:helmholtz_onept} (left plot) and in Table \ref{tab:helmholtz_64pt} (right plot). From Tables  \ref{tab:helmholtz_onept} and \ref{tab:helmholtz_64pt} and from Figure \ref{fig:helmholtz_d} we can see that the error increases slighlty as the dimension $d$ of the parameter space increases. Figure \ref{fig:helmholtz_d} shows that the dependence of the error on the dimension is $O(\log d)$, with a slope depending on $p$. The slopes for $p=2$ and $p=3$ are so small that the dependence is almost $O(1)$. Tables  \ref{tab:helmholtz_onept} and \ref{tab:helmholtz_64pt} also show a mild increase of the error as the contrast $\alpha_i$ increases. We remind that $\alpha_i$ determines the jump of the first derivative of the solution in the physical space across the nominal interface.

\begin{table}
\begin{center}
 \begin{tabular}{||c|| c |c| c| c||} 
  \multicolumn{5}{c}{$\alpha_i=10, \kappa_i/\kappa_o=0.8$}\\
 \hline
\backslashbox{$\boldsymbol{p}$}{$\boldsymbol{d}$}  &\textbf{8} & \textbf{16} & \textbf{32} & \textbf{64} \\ [0.5ex] 
 \hline\hline
$\mathbf{1}$ & $0.45$\textperthousand & $0.92$\textperthousand & $1.18$\textperthousand & $1.36$\textperthousand\\
 \hline
$\mathbf{2}$& $0.15$\textperthousand & $0.18$\textperthousand & $0.20$\textperthousand & $0.20$\textperthousand \\
  \hline
$\mathbf{3}$ & $0.09$\textperthousand  & $0.08$\textperthousand & $0.10$\textperthousand & $0.10$\textperthousand \\
  \hline
 \end{tabular}\hfill
  \begin{tabular}{||c|| c |c| c| c||} 
  \multicolumn{5}{c}{$\alpha_i=100, \kappa_i/\kappa_o=0.8$}\\
 \hline
\backslashbox{$\boldsymbol{p}$}{$\boldsymbol{d}$}  &\textbf{8} & \textbf{16} & \textbf{32} & \textbf{64} \\ [0.5ex] 
 \hline\hline
$\mathbf{1}$ & $0.63$\textperthousand & $1.38$\textperthousand & $1.71$\textperthousand & $2.01$\textperthousand\\
 \hline
$\mathbf{2}$& $0.22$\textperthousand & $0.28$\textperthousand & $0.25$\textperthousand & $0.30$\textperthousand \\
  \hline
$\mathbf{3}$ & $0.12$\textperthousand  & $0.13$\textperthousand & $0.15$\textperthousand & $0.13$\textperthousand \\
  \hline
 \end{tabular}
 
 \vspace{0.3cm}
   \begin{tabular}{||c|| c |c| c| c||} 
  \multicolumn{5}{c}{$\alpha_i=1000, \kappa_i/\kappa_o=0.8$}\\
 \hline
\backslashbox{$\boldsymbol{p}$}{$\boldsymbol{d}$}  &\textbf{8} & \textbf{16} & \textbf{32} & \textbf{64} \\ [0.5ex] 
 \hline\hline
$\mathbf{1}$ & $0.64$\textperthousand & $1.34$\textperthousand & $1.82$\textperthousand & $2.15$\textperthousand\\
 \hline
$\mathbf{2}$& $0.21$\textperthousand & $0.24$\textperthousand & $0.28$\textperthousand & $0.30$\textperthousand \\
  \hline
$\mathbf{3}$ & $0.10$\textperthousand  & $0.13$\textperthousand & $0.14$\textperthousand & $0.16$\textperthousand \\
  \hline
 \end{tabular}
 \end{center}\caption{Helmholtz transmission problem: square root of the loss \eqref{eq:loss}-\eqref{eq:l2relative} for the point evaluation at $\Bx_0=(r_0,0)$, for different values of the contrast $\alpha_i$, different decays $p$ of the coefficients in \eqref{eq:radiusy} and different dimensions $d$ of the parameter space. We have fixed $\frac{\kappa_i}{\kappa_o}=0.8.$}\label{tab:helmholtz_onept}
 \end{table}

\begin{table}
\begin{center}
 \begin{tabular}{||c|| c |c| c| c||} 
  \multicolumn{5}{c}{$\alpha_i=10, \kappa_i/\kappa_o=0.8$}\\
 \hline
\backslashbox{$\boldsymbol{p}$}{$\boldsymbol{d}$}  &\textbf{8} & \textbf{16} & \textbf{32} & \textbf{64} \\ [0.5ex] 
 \hline\hline
$\mathbf{1}$ &$1.79 \%$  & $2.80 \%$ & $2.57 \%$ & $2.59 \%$\\
 \hline
$\mathbf{2}$& $1.09 \%$ & $1.19 \%$ & $1.13 \%$ & $1.52 \%$ \\
  \hline
$\mathbf{3}$ &  $0.75 \%$ & $0.80 \%$ & $0.70 \%$ & $0.73 \%$ \\
  \hline
 \end{tabular}\hfill
  \begin{tabular}{||c|| c |c| c| c||} 
  \multicolumn{5}{c}{$\alpha_i=100, \kappa_i/\kappa_o=0.8$}\\
 \hline
\backslashbox{$\boldsymbol{p}$}{$\boldsymbol{d}$}  &\textbf{8} & \textbf{16} & \textbf{32} & \textbf{64} \\ [0.5ex] 
 \hline\hline
$\mathbf{1}$ & $2.15 \%$ & $3.21 \%$ & $3.72 \%$ & $3.75 \%$\\
 \hline
$\mathbf{2}$& $1.23 \%$  & $1.31 \%$ & $1.33 \%$ & $1.34 \%$\\
  \hline
$\mathbf{3}$ & $0.86 \%$ & $0.83 \%$ & $0.89 \%$ &  $0.91 \%$\\
  \hline
 \end{tabular}
 
 \vspace{0.3cm}
   \begin{tabular}{||c|| c |c| c| c||} 
  \multicolumn{5}{c}{$\alpha_i=1000, \kappa_i/\kappa_o=0.8$}\\
 \hline
\backslashbox{$\boldsymbol{p}$}{$\boldsymbol{d}$}  &\textbf{8} & \textbf{16} & \textbf{32} & \textbf{64} \\ [0.5ex] 
 \hline\hline
$\mathbf{1}$ & $2.28 \%$ & $3.13 \%$ & $4.22 \%$ & $3.98 \%$\\
 \hline
$\mathbf{2}$& $1.33 \%$ & $1.35 \%$ & $1.27 \%$ & $1.34 \%$\\
  \hline
$\mathbf{3}$ & $0.81 \%$ & $0.90 \%$ & $0.95 \%$ & $0.89 \%$\\
  \hline
 \end{tabular}
 \end{center}\caption{Helmholtz transmission problem: square root of the loss \eqref{eq:loss}-\eqref{eq:l2relative} for $64$ point evaluations at equispaced points on the circumference of radius $r_0$, for different values of the contrast $\alpha_i$, different decays $p$ of the coefficients in \eqref{eq:radiusy} and different dimensions $d$ of the parameter space. We have fixed $\frac{\kappa_i}{\kappa_o}=0.8.$}\label{tab:helmholtz_64pt}
 \end{table}
 
\begin{figure}
\centering
\includegraphics[scale=0.35]{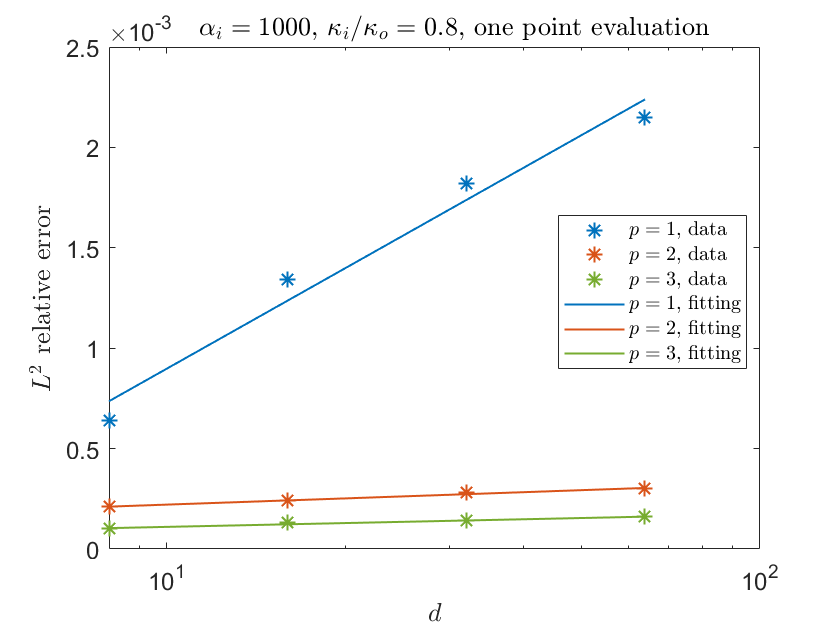}
\includegraphics[scale=0.35]{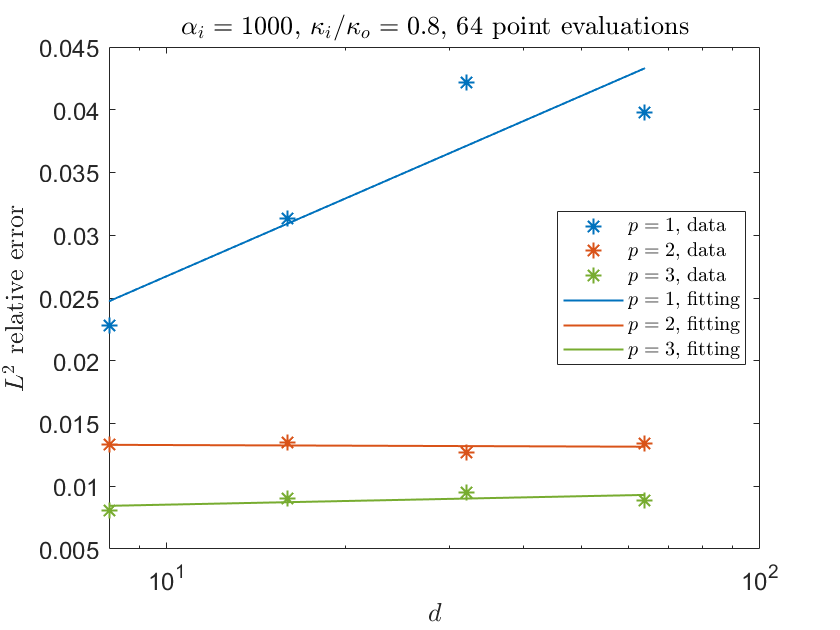}
\caption{Helmholtz interface problem: square root of the loss \eqref{eq:loss}-\eqref{eq:l2relative} in dependence of the dimension $d$ of the parameter space, for different decays $p$ of the coefficients in \eqref{eq:radiusy}, one (left) and $64$(right) point evaluations. The contrasts have been fixed to $\alpha_i=1000$ and $\frac{\kappa_i}{\kappa_0}=0.8$. The markers denote the data points, while the lines are the least square fitting.}\label{fig:helmholtz_d}
\end{figure}

In order to study the behavior of the error in dependence of the jumps of the derivatives of the solution in physical space across the nominal interface, we consider now $\alpha_i=1$ and we vary the contrast $\frac{\kappa_i}{\kappa_o}$. The results are shown in Table \ref{tab:helmholtz_varyn}, for one (left plot) and $64$ (right plot) point evaluations, where we have fixed the dimension of the parameter space to $d=16$. Differently from the previous cases, since now $\alpha_i=1$, the solution in the physical space is not only $C^0$ but $C^1$ across the nominal interface, and therefore the dependence on the stochastic parameter is also of class $C^1$, see Proposition \ref{prop:contyhelm}. Comparing the numbers in Table \ref{tab:helmholtz_varyn} for $\frac{\kappa_i}{\kappa_o}=0.8$ with those in Tables \ref{tab:helmholtz_onept} and \ref{tab:helmholtz_64pt}, we see that the errors are lower in the case of $C^1$ dependence of the quantity of interest with respect to the parameter. Table \ref{tab:helmholtz_varyn} shows also that the errors increase as we increase  the contrast $\frac{\kappa_i}{\kappa_o}$.
 
 \begin{table}
\begin{center}
 \begin{tabular}{||c|| c |c| c||} 
  \multicolumn{4}{c}{$d=16, \alpha_i=1$}\\ 
 \hline
\backslashbox{$\boldsymbol{p}$}{$\frac{\boldsymbol{\kappa_i}}{\boldsymbol{\kappa_o}}$}  & $0.8$ & $0.08$ & $0.008$ \\ [0.5ex] 
 \hline\hline
$\mathbf{1}$ &$0.21$\textperthousand  & $1.62$\textperthousand & $1.86$\textperthousand\\
 \hline
$\mathbf{2}$& $0.10$\textperthousand  &  $0.51$\textperthousand & $0.53$\textperthousand \\
  \hline
$\mathbf{3}$ & $0.08$\textperthousand  & $0.26$\textperthousand & $0.34$\textperthousand \\
  \hline
 \end{tabular}\hspace{1cm}
  \begin{tabular}{||c|| c |c| c||} 
  \multicolumn{4}{c}{$d=16, \alpha_i=1$}\\ 
 \hline
\backslashbox{$\boldsymbol{p}$}{$\frac{\boldsymbol{\kappa_i}}{\boldsymbol{\kappa_o}}$}  & $0.8$ & $0.08$ & $0.008$ \\ [0.5ex] 
 \hline\hline
$\mathbf{1}$ & $0.55 \%$  &  $1.92 \%$ &  $2.10 \%$\\
 \hline
$\mathbf{2}$&  $0.26 \%$  &  $0.86 \%$ &  $0.98 \%$ \\
  \hline
$\mathbf{3}$ & $0.28 \%$  &  $0.60 \%$ &  $0.57 \%$ \\
  \hline
 \end{tabular}
 \end{center}\caption{Helmholtz transmission problem: square root of the loss \eqref{eq:loss}-\eqref{eq:l2relative} for one point evaluation at $\Bx_0=(r_0,0)$ (left) and $64$ point evaluations (right) when $\alpha_i=1$, for different values of the contrast $\frac{\kappa_i}{\kappa_o}$ and different decays $p$ of the coefficients in \eqref{eq:radiusy}. The dimension $d$ of the parameter space has been fixed to $d=16$.}\label{tab:helmholtz_varyn}
 \end{table}

Fixing now $\alpha_i=100$ and $\frac{\kappa_i}{\kappa_o}=0.8$, we examine now the dependence of the error on the number $N_p$ of point evaluations, that is on the number of surfaces of non-smoothness. This is shown in Figure \ref{fig:Helmholtz_Np}, for $d=32$ (left plot) and $d=64$ (right plot). As in the elliptic case, we observe a logarithmic increase of the error with the number of point evaluations, with a slope depending on the decay $p$ of the coefficients in the radius expansion. In particular, also for the Helmholtz transmission problem the increase of the error is milder than predicted by the theory, see Corollary \ref{cor:morepts}. 
 

\begin{figure}
\centering
\includegraphics[scale=0.35]{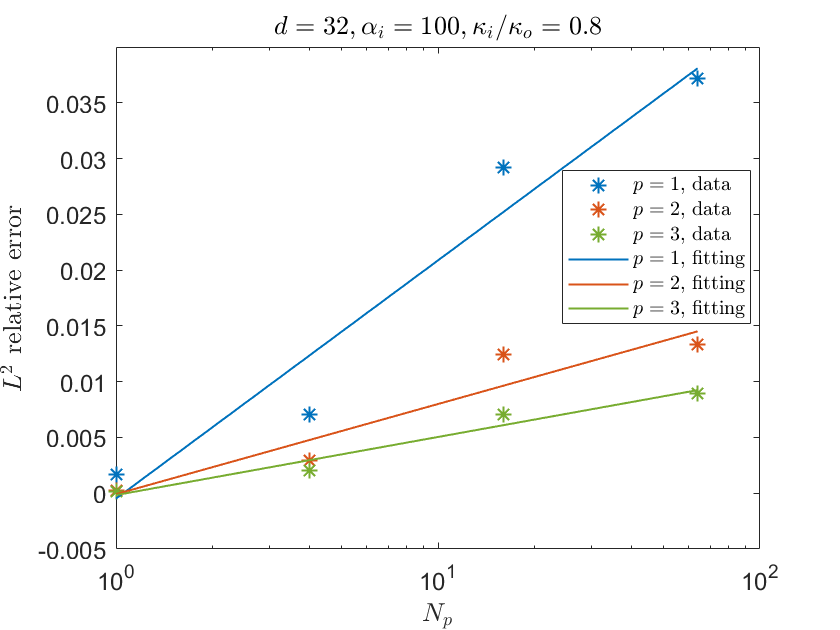}
\includegraphics[scale=0.35]{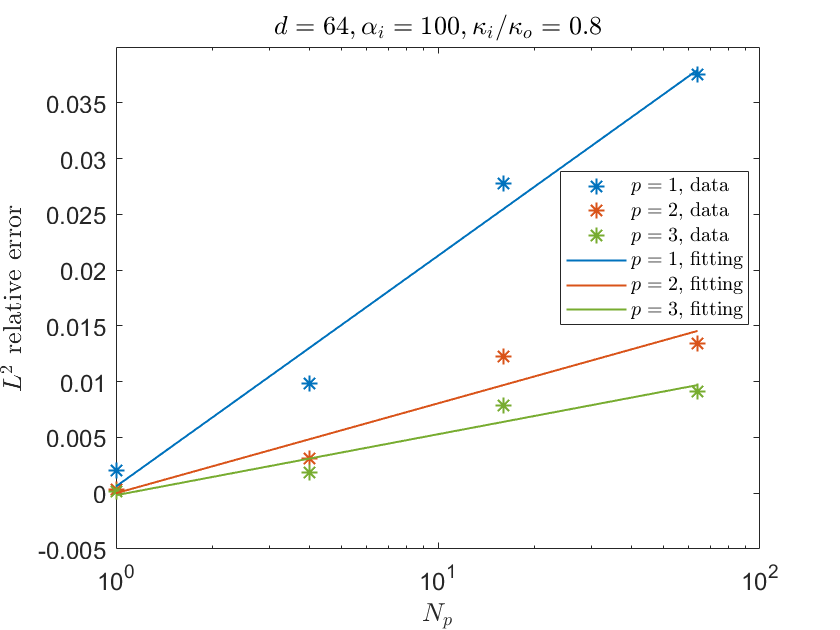}\caption{Helmholtz transmission problem: square root of the loss \eqref{eq:loss}-\eqref{eq:l2relative} in dependence of the number of point evaluations at equispaced points on the circumference of radius $r_0$, for different decays $p$ of the coefficients in \eqref{eq:radiusy}, dimensions $d=32$ (left) and $d=64$ (right) of the parameter space. The contrasts have been fixed to $\alpha_i=100$ and $\frac{\kappa_i}{\kappa_o}=0.8$. The markers denote the data points, while the lines are the least square fitting.}\label{fig:Helmholtz_Np}
\end{figure}

Finally, we study the dependence of the error on the frequency of the incoming wave, that is on the wavenumber $\kappa_o$ in free space. So far we have considered $\kappa_o=k_o:=\frac{200 \pi}{3}$. We now compare this case with the cases $\kappa_o=2k_0$ and $\kappa_o=3k_0$. For the case $\kappa_o=2k_0$ we have considered a finer mesh with $350582$ elements, and for $\kappa_0=3k_0$ we have taken an even finer mesh with $468682$ elements, so that the number of finite elements per wavelength is not too different for the different test cases. We fix the other parameters to $\alpha_i=100$, $\frac{\kappa_i}{\kappa_o}=0.8$ and $d=16$. The results are shown in Table \ref{tab:varyfreq}, for one (left plot) and $64$ (right plot) point evaluations. There we clearly see that the error is very sensitive to the wavenumber, with an increase which seems to be polynomial in the wavenumber. A possible explanation for such behavior is the following. The larger the outer wavenumber, the more oscillatory in the solution in the physical space; such space dependence intertwines with the parameter dependence, resulting in a more oscillatory behavior of the quantity of interest in the parameter space; such oscillatory behavior is harder to approximate by the neural network, leading to larger errors.

 \begin{table}
\begin{center}
 \begin{tabular}{||c|| c |c| c||} 
  \multicolumn{4}{c}{$\alpha_i=100, \kappa_i/\kappa_o = 0.8$}\\ 
 \hline
\backslashbox{$\boldsymbol{p}$}{$\boldsymbol{\kappa}$}  & $k_0$ & $2k_0$ & $3k_0$ \\ [0.5ex] 
 \hline\hline
$\mathbf{1}$ & $1.38$\textperthousand  & $2.04$\textperthousand & $4.31$\textperthousand\\
 \hline
$\mathbf{2}$& $0.28$\textperthousand  & $0.45$\textperthousand & $0.73$\textperthousand \\
  \hline
$\mathbf{3}$ & $0.13$\textperthousand  & $0.20$\textperthousand & $0.33$\textperthousand \\
  \hline
 \end{tabular}\hspace{1cm}
  \begin{tabular}{||c|| c |c| c||} 
  \multicolumn{4}{c}{$\alpha_i=100, \kappa_i/\kappa_o = 0.8$}\\ 
 \hline
\backslashbox{$\boldsymbol{p}$}{$\boldsymbol{\kappa}$}  & $k_0$ & $2k_0$ & $3k_0$ \\ [0.5ex] 
 \hline\hline
$\mathbf{1}$ & $3.21 \%$  & $5.57 \%$ & $9.20 \%$\\
 \hline
$\mathbf{2}$& $1.31 \%$  & $2.18 \%$ & $3.62 \%$ \\
  \hline
$\mathbf{3}$ & $0.83 \%$  & $1.42 \%$ & $2.09 \%$ \\
  \hline
 \end{tabular}
 \end{center}\caption{Helmholtz transmission problem: square root of the loss \eqref{eq:loss}-\eqref{eq:l2relative} in dependence of the outer wavenumber, for different decays $p$ of the coefficients in \eqref{eq:radiusy}, one (left) and $64$ (right) point evaluations on the circumference of radius $r_0$. Here $k_0=\frac{2 00\pi }{3}$. The contrasts have been fixed to $\alpha_i=100$ and $\frac{\kappa_i}{\kappa_o}=0.8$, and the dimension of the parameter space is $d=16$.}\label{tab:varyfreq}
 \end{table}
 
\subsection{Comments on the numerical results}

The observations deduced from the numerical experiments can be summarized as follows. 

\begin{itemize}
\item \textsl{Dimensionality}: the behavior of the error as the dimension $d$ of the parameter space increases depends on the decay $p$ of the coefficients in \eqref{eq:radiusy}. In particular, in both model problems we have observed a behavior $O(\log d)$ for $p=1$ and essentially $O(1)$ for $p=2,3$. The latter is not surprising, especially for $p=3$, when comparing the very little contribution of the higher dimensions to the maximum shape variations in Table \ref{tab:maxshapevar}.
\item \textsl{Model problem and modeling parameters}: the error of the deep learning surrogate depends on:
\begin{itemize}
\item \textsl{model problem}: in general we have seen that the error is larger for the Helmholtz case than in the elliptic case; furthermore, for the Helmholtz transmission problem we have seen that the error is very sensitive to the frequency and increases as the frequency increases;
\item \textsl{contrast}: the dependence of the error on the jump in the first derivatives, that is on the contrast $\alpha_i$, is in general very mild and it depends on $p$; in particular, we have observed larger errors for $p=1$ and $\alpha_i=100,1000$ compared to $p=1$ and $\alpha_i=10$, especially in the Helmholtz problem, while for $p=2,3$ we could not see a clear dependence of the error on the contrast;
\item \textsl{type of discontinuity}: for the Helmholtz transmission problem, we have seen that the error is smaller when $\alpha_i=1$ compared to when $\alpha_i\neq 1$, while keeping all other parameters fixed, that is, the error is smaller in case of $C^1$-dependence of the quantity of interest with respect to the stochastic parameter; for $\alpha_i=1$, when we then increase the contrast $\frac{\kappa_i}{\kappa_o}$, the error increases. 
\end{itemize}
\item \textsl{Number and location of surfaces of non-smoothness}: in both model problems, we have seen a behavior of the error of the kind $O(\log N_p)$, when $N_p$ is the number of evaluation points, that is the number of surfaces of non-smoothness. Such increase of the error is milder than what predicted by the theory in Corollary \ref{cor:morepts}. Moreover, for the elliptic case and one point evaluation, we have seen that the errors tends to be larger when the point is outside of the scatterer for most of the realizations than when it is inside for most realizations, and this could be due to the fact that the gradient with repect to the parameter is larger on the side of the parameter space for which the point is outside the scatterer.
\end{itemize}

\section{Conclusions}

The point evaluation of the solution to an interface problem with stochastic interface is known to depend non-smoothly on the parameter modeling the shape variations, posing a challenge to the construction of a surrogate for the parameter-to-quantity-of-interest map, especially in high dimensions. In this work, we have proposed to use deep neural networks to build such a surrogate. Based on previous results on neural network approximation, we have provided a theoretical justification for why we expect neural networks to be good surrogates for elliptic and Helmholtz interface problems. Numerical results confirm the good performance of some deep networks, with an error which increases logarithmically with the number of point evaluations, that is with the number of surfaces of non-smoothness. For our model problems the neural networks show also very clearly not to suffer from the curse of dimensionality, with a dependence of the error on the dimension of the parameter space ranging from $O(\log d)$ to $O(1)$. Furthermore, we have investigated numerically the dependence of the performance on various modeling parameters, such as the contrast and, for the Helmholtz problem, the frequency.

\section*{Acknowledgements}

The author would like to thank Ralf Hiptmair for having encouraged her in writing this article and for comments on a draft of this paper.

\bibliographystyle{siam}
\bibliography{references}
\end{document}